\newtheorem{theorem}{Theorem}
\newtheorem{proposition}{Proposition}
\newtheorem{lemma}{Lemma}
\newtheorem{corollary}{Corollary}
\theoremstyle{definition}
\newtheorem{definition}{Definition}
\theoremstyle{remark}
\newtheorem{remark}{Remark}
\newtheorem{open}{Open problem}
\begin{document}

\newcommand{\rp}{\mathbb{R}\mathrm{P}^2 }
\newcommand{\G}{\mathbb{G}}
\newcommand{\RP}{$\mathbb{R}\mathrm{P}^2$ }
\newcommand{\RPt}{$\mathbb{R}\mathrm{P}^2$}
\newcommand{\bs}{\backslash}
\newcommand{\RPthree}{$\mathbb{R}\mathrm{P}^3$ }
\newcommand{\RPthreet}{$\mathbb{R}\mathrm{P}^3$}
\newcommand{\A}{\mathbb{A}}


\title[Graphs of links in projective space]{On the ribbon graphs of links in real projective space}

\author{Iain Moffatt}
\author{Johanna Str\"{o}mberg}
\address{Department of Mathematics, Royal Holloway University of London, Egham, Surrey, TW20 0EX, United Kingdom}
\email{iain.moffatt@rhul.ac.uk}
\email{anna.stromberg.2011@live.rhul.ac.uk}


\subjclass[2010]{Primary 	57M15; Secondary 05C10}
\keywords{ribbon graph; links in real projective space; Turaev surface; virtual link; partial dual; Tait graph}
\date{\today}

\begin{abstract}
Every link diagram can be represented as a signed ribbon graph. However, different link diagrams can be represented by the same ribbon graphs. We determine how checkerboard colourable diagrams of links in real projective space, and virtual link diagrams, that are represented by the same ribbon graphs are related to each other. We also find moves that relate the diagrams of links in real projective space that give rise to (all-A) ribbon graphs with exactly one vertex.
\end{abstract}

\maketitle


\section{Introduction and overview}
It is well-known that a classical link diagram can be represented by a unique signed plane graph, called its Tait graph (see, for example, the surveys \cite{MR1633290,MR3086663,MR1245272}). This construction provides a seminal connection between the areas of graph theory and knot theory, and has found impressive applications such as in  proofs of the Tait conjectures \cite{MR895570,MR899051}. Tait graphs can also be constructed  for checkerboard colourable link diagrams on other surfaces, in which case the resulting graph is embedded on the surface.  However, as this construction requires checkerboard colourability, Tait graphs cannot be constructed  for  arbitrary link diagrams on a surface, or arbitrary virtual link diagrams. Recently, Dasbach,  Futer, Kalfagianni, Lin, and Stoltzfus, in \cite{MR2389605}, extended the idea of a Tait graph by associating a set of signed ribbon graphs  to a link  diagram (see also Turaev \cite{MR925987}). Chmutov and Voltz extended this construction, giving a way to describe an arbitrary virtual link diagram as a signed ribbon graph in  \cite{MR2460170}. These constructions extend to graphs in other surfaces. The ribbon graphs of link diagrams have found numerous applications, and we refer the reader to the surveys \cite{Champanerkar:2014yq,MR3086663} for details. 

Every signed plane graph represents a unique classical link diagram. In contrast, a single signed ribbon graph can represent several different link diagrams or virtual link diagrams. This observation leads to the fundamental problem of determining  how link diagrams that are presented by the same signed ribbon graphs are related to each other. It is this problem that interests us here.  It was solved for classical link diagrams in \cite{MR2928906}. Here we solve it for checkerboard colourable diagrams of links in  \RPthree (in Theorem~\ref{mainthm}), and for virtual link diagrams (in Theorem~\ref{t.vmain}). 

We also examine the one-vertex ribbon graphs of diagrams of links  in \RPthreet. Every classical link diagram  can be represented as a ribbon graph with exactly one vertex. In  \cite{MR3148509}, Abernathy et al  gave a set of moves that provide a way to move between all of the diagrams of a classical link that have one-vertex all-A ribbon graphs. We extend their work to the setting of links in \RPthreet.

This paper is structured as follows. In Section~\ref{nt} we give an overview of diagrams of links in $\mathbb{R}\mathrm{P}^3$ and of ribbon graphs.  In Section~\ref{s3} we describe how diagrams of links in $\mathbb{R}\mathrm{P}^3$  can be represented by   ribbon graphs and we determine how checkerboard colourable diagrams that give rise to the same ribbon graphs are related to one another. In section~\ref{1v} we study the ribbon graphs of  diagrams of links in $\mathbb{R}\mathrm{P}^3$ that have exactly one vertex. Finally, in Section~\ref{s.virt}  we describe how virtual link diagrams that give rise to the same ribbon graphs are related to one another

This work arose from J.S.'s undergraduate thesis at Royal Holloway, University of London which was supervised by  I.M..

\section{Notation and terminology}\label{nt}

\subsection{Links in $\mathbb{R}\mathrm{P}^3$ and their diagrams}

In this section we provide a brief overview of links in $\mathbb{R}\mathrm{P}^3$ and their diagrams. Further results and details can be found in \cite{MR1296890,MR1073213,MR2414448,MR1999636,MR895570,MR1414898}.

A \emph{diagram} of a link in $\mathbb{R}\mathrm{P}^3$ is a disc $D^2$ in the plane together with a collection of immersed arcs (where an arc is a compact connected 1-manifold possibly with boundary). The end points of arcs with boundary lie on the boundary of the disc $\partial D^2$, are divided into antipodal pairs, and these are the only points of the arcs that intersect $\partial D^2$. We further assume that the arcs are generically immersed, in that they have finitely many multiple points and each multiple point is a  double point in which the arcs meet transversally.  Finally, each double point is assigned an over/under-crossing structure, and is called a \emph{crossing}. Figure~\ref{f1a} shows a diagram of a link in $\mathbb{R}\mathrm{P}^3$.  Here, $D$ will always refer to a diagram of a link in $\mathbb{R}\mathrm{P}^3$.

\begin{figure}
\centering
\subfigure[A diagram $D$ of a link in $\mathbb{R}\mathrm{P}^3$.]{
\includegraphics[scale=.4]{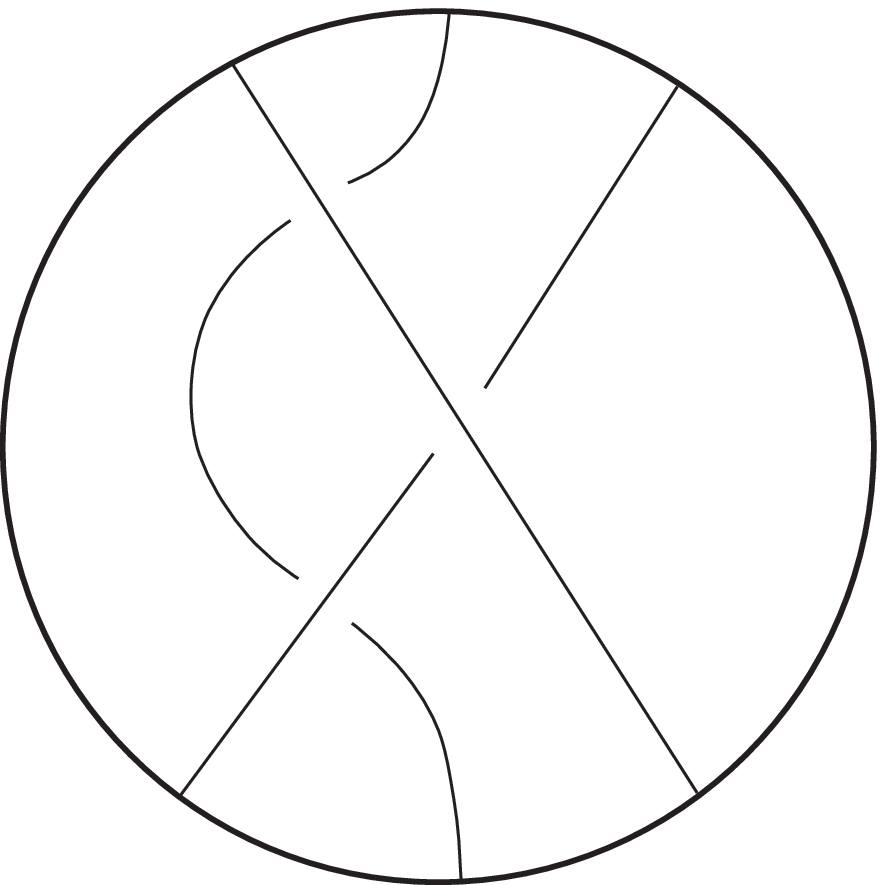}
\label{f1a}
}
\hspace{10mm}
\subfigure[A  state $\sigma$ of $D$ . ]{
\labellist
 \small\hair 2pt
\pinlabel {$a +$}  at 118 198
\pinlabel {$b -$}  at 145 145
\pinlabel {$c -$}  at 100 80
\endlabellist
\includegraphics[scale=.4]{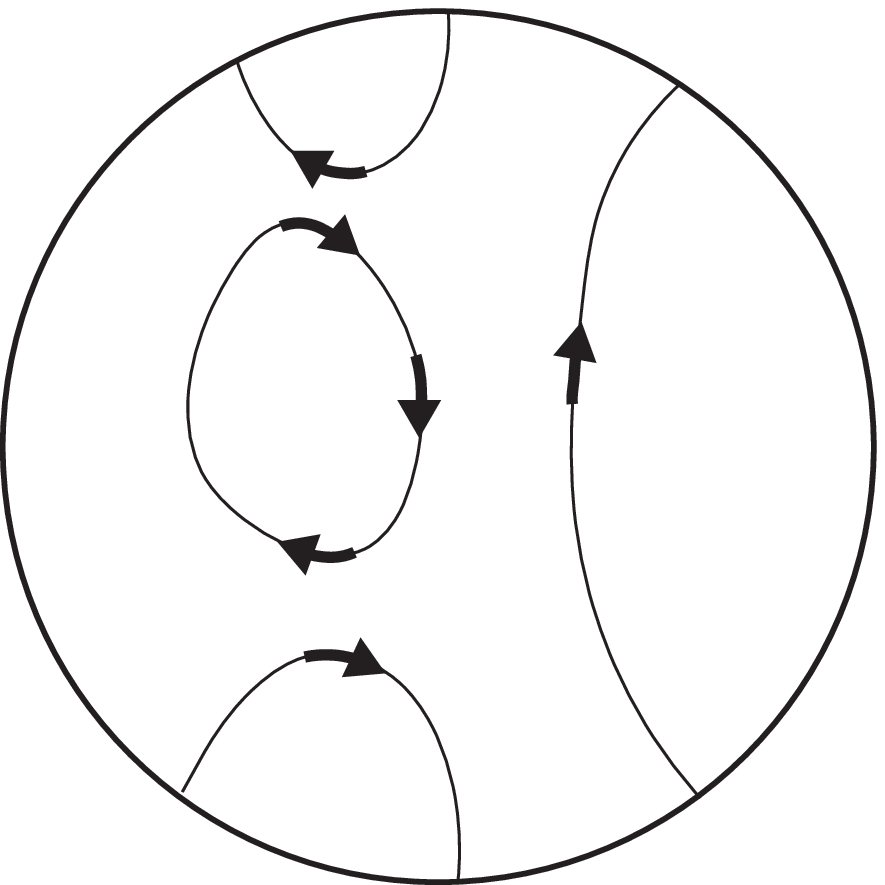}
\label{f1b}
}
\hspace{10mm}
\subfigure[Redrawing the arrow presentation for $G_{(D,\sigma)}$.]{
\labellist
 \small\hair 2pt
\pinlabel {$a +$}  at 59 165
\pinlabel {$a +$}  at 250 165
\pinlabel {$b -$}  at 140 90
\pinlabel {$b -$} at 330 90
\pinlabel {$c -$} at 55 20
\pinlabel {$c -$} at 250 20
\endlabellist
\includegraphics[scale=.4]{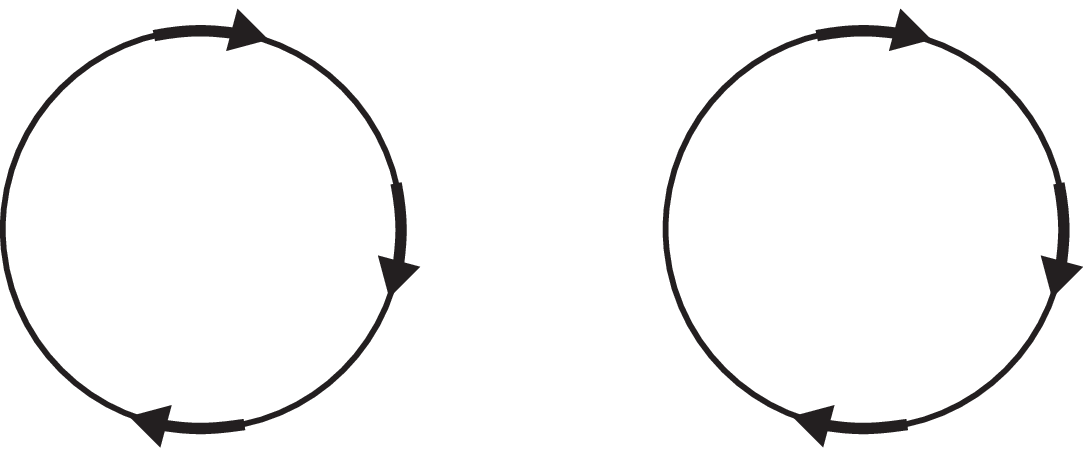}
\label{f1c}
}
\hspace{10mm}
\subfigure[ $G_{(D,\sigma)}$ as a ribbon graph. ]{
\labellist
 \small\hair 2pt
\pinlabel {$a +$}  at 123 195
\pinlabel {$b -$}  at 250 211
\pinlabel {$c -$}  at 150 15
\endlabellist
\includegraphics[scale=.35]{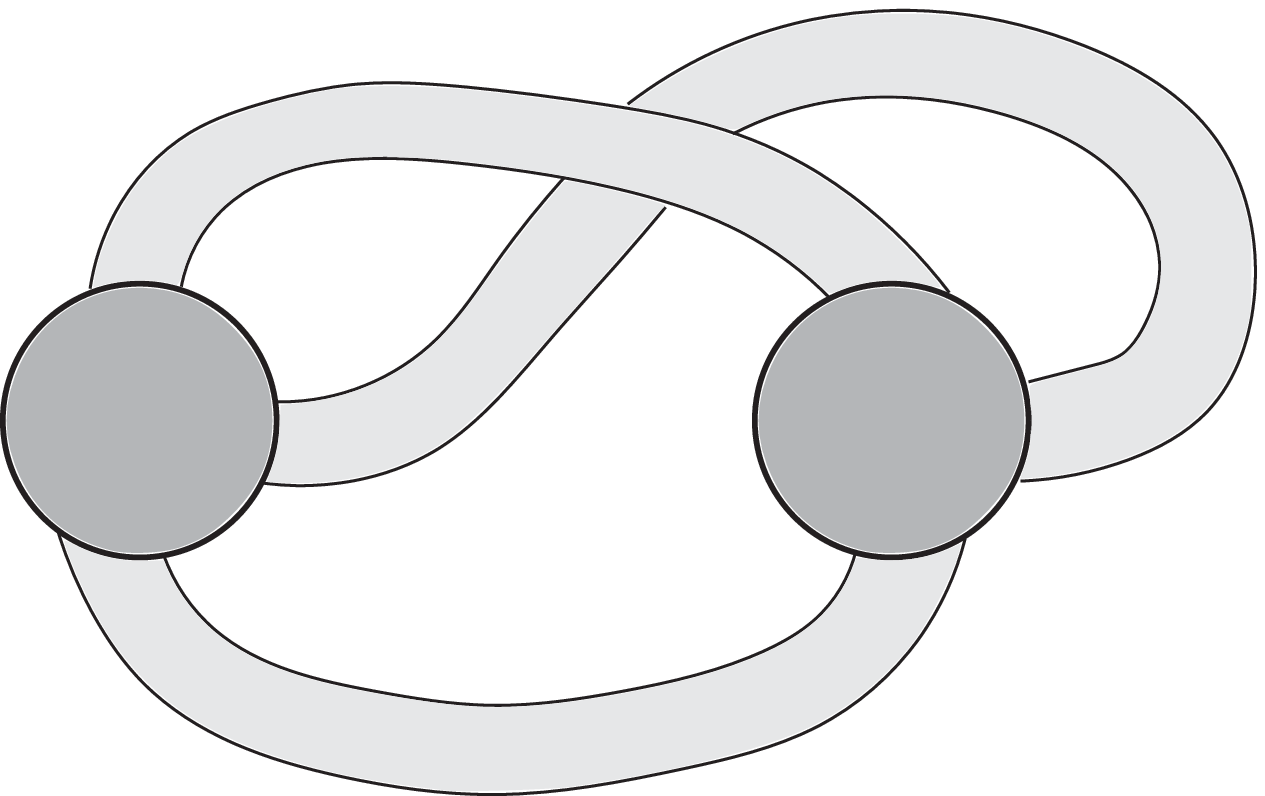}
\label{f1d}
}
\subfigure[Embedding $G_{(D,\sigma)}$ in a surface.]{
\includegraphics[scale=.4]{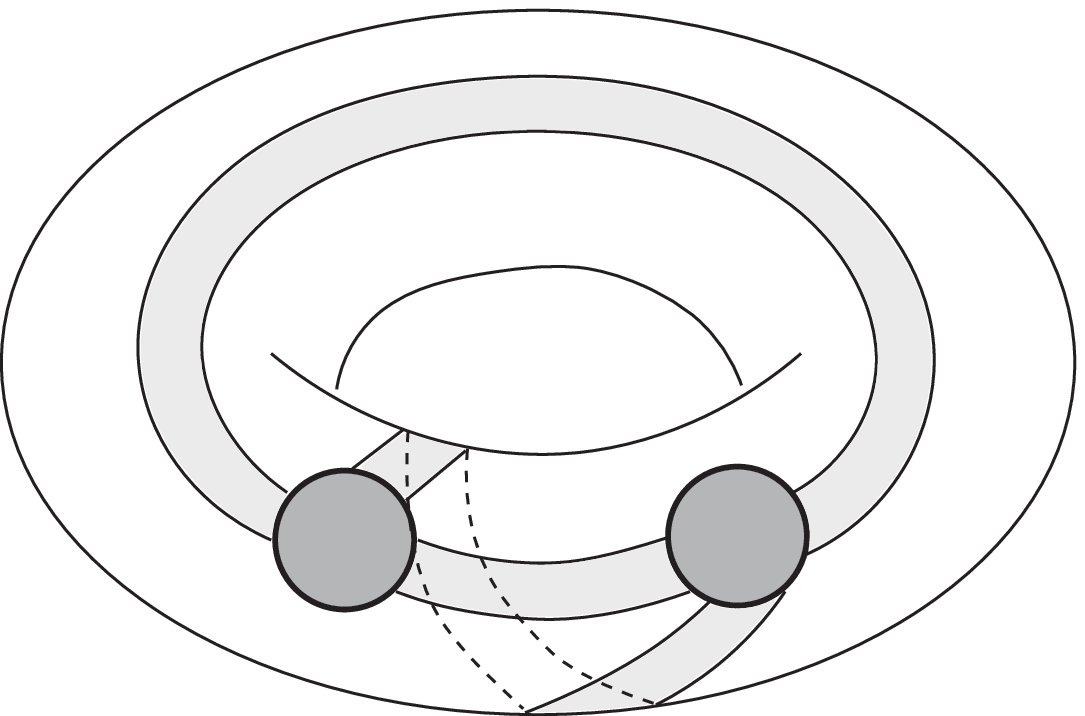}
\label{f1e}
}
\hspace{10mm}
\subfigure[ $G_{(D,\sigma)}$ as a cellularly embedded graph. ]{
\includegraphics[scale=.4]{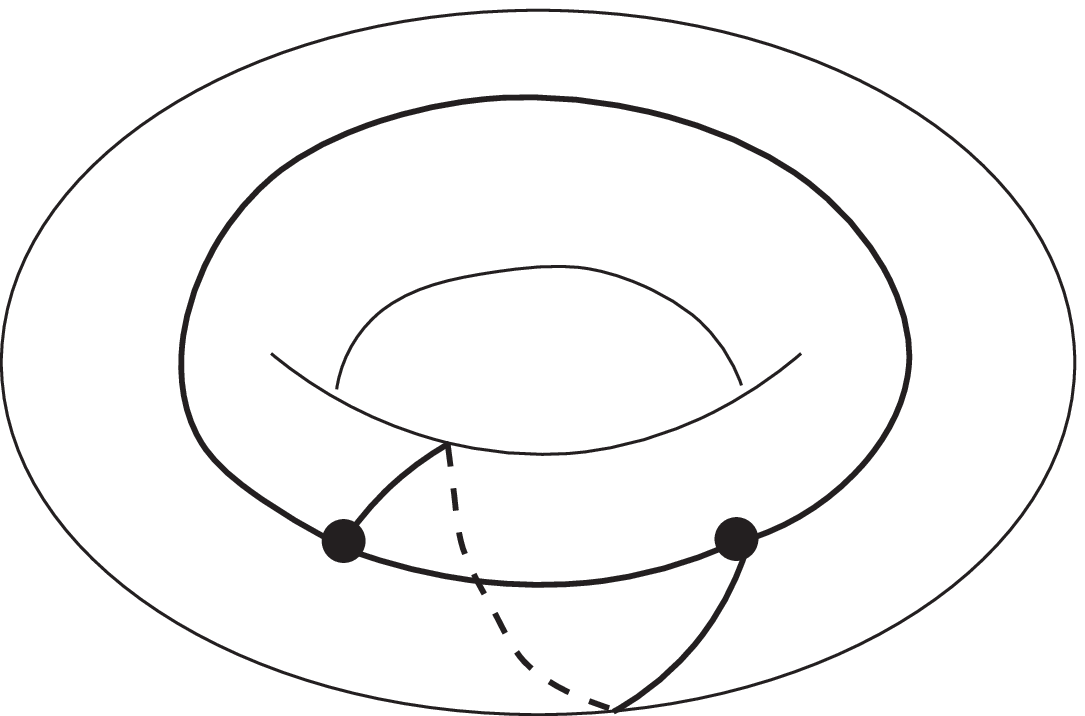}
\label{f1f}
}
\caption{A diagram $D$ of a link in $\mathbb{R}\mathrm{P}^3$ and one of its ribbon graphs.}
\label{f1}
\end{figure}

A \emph{net} is the real projective plane  $\mathbb{R}\mathrm{P}^2$ together with a distinguished projective line, called the \emph{line at infinity}, and a collection of generically immersed closed curve where each double point is assigned an over/under-crossing structure. Let $D$ be a diagram of a link in $\mathbb{R}\mathrm{P}^3$, then the \emph{net of $D$}, denoted $\mathcal{N}_D$, is obtained from $D$  by identifying the antipodal points of $\partial D^2$. The image of $\partial D^2 $ in the net gives the line at infinity.

A \emph{component} of $D$ is a collection of its arcs that give rise to a single closed curve in its net $\mathcal{N}_D$. A component is \emph{null-homologous} if the corresponding curve in $\mathcal{N}_D$ is trivial in $H_1(\mathbb{R}\mathrm{P}^2)=\mathbb{Z}_2$ and is \emph{1-homologous} otherwise.  We will say that a diagram is \emph{null-homologous}  if each of its components is. The \emph{faces} of $D$ (respectively, $\mathcal{N}_D$) are the components of $D\backslash  \alpha$  (respectively, $\mathcal{N}_D\backslash  \alpha$) where $\alpha$ is the set of immersed curves. A \emph{region} of $D$ is a collection of its faces that correspond to a single face in its net $\mathcal{N}_D$.
A diagram $D$ is \emph{checkerboard colourable} if there is an assignment of the colours black and white to its regions such that no two adjacent regions (those meeting a common arc) are assigned the same colour. A diagram may or may not be checkerboard colourable. For example, the diagram  in Figure~\ref{f1a} is not,  but that in Figure~\ref{f3d} is.

The \emph{Reidemeister moves} for diagrams of links in $\mathbb{R}\mathrm{P}^3$ consist of isotopy of the disc that preserves the antipodal pairing (which we call the \emph{R0-move}), together with the five moves  in Figure~\ref{f2} that change the diagram locally as shown (the diagrams are identical outside of the given region). In the figure, the bold lines represent the boundary of the disc. Two diagrams are \emph{equivalent} if they are related by a sequence of Reidemeister moves.

\begin{figure}[ht]
\centering
\subfigure[The classical moves.]{
\labellist
 \small\hair 2pt
\pinlabel {RIII}  at 108 50
\pinlabel {RII}  at 108 158
\pinlabel {RI}  at 108 264
\endlabellist
\includegraphics[scale=.6]{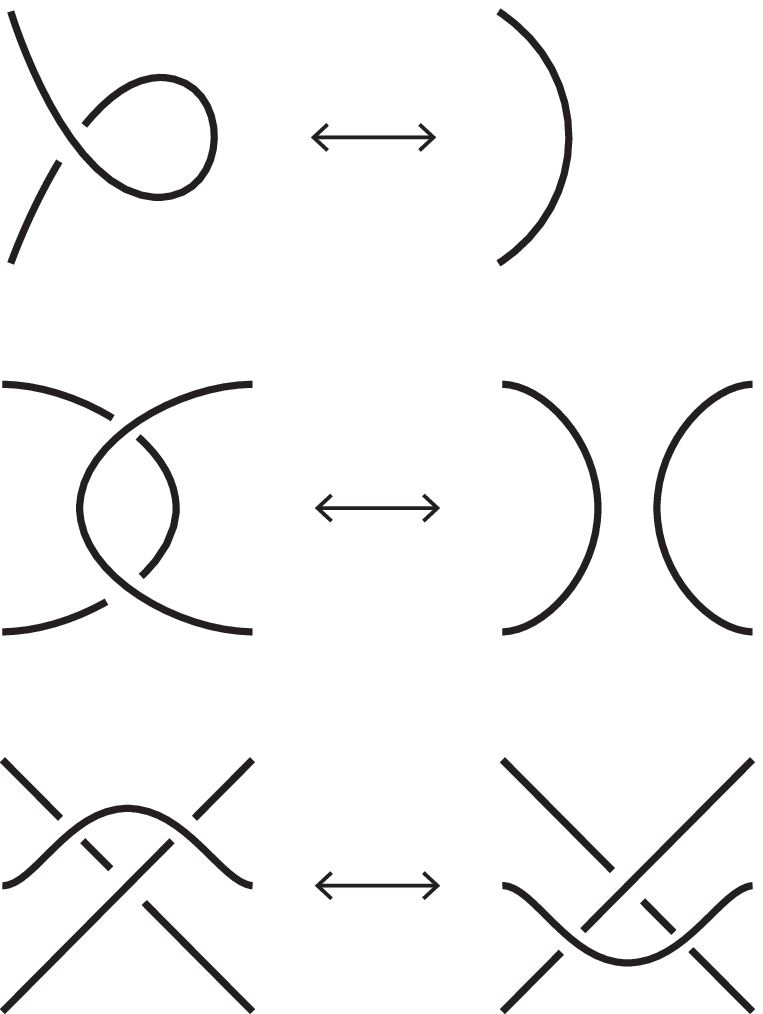}
\label{f2a}
}
\hspace{17mm}
\subfigure[The boundary moves.]{
\labellist
 \small\hair 2pt
\pinlabel {RV}  at 258 120
\pinlabel {RIV}  at 258 380
\endlabellist
\includegraphics[scale=.35]{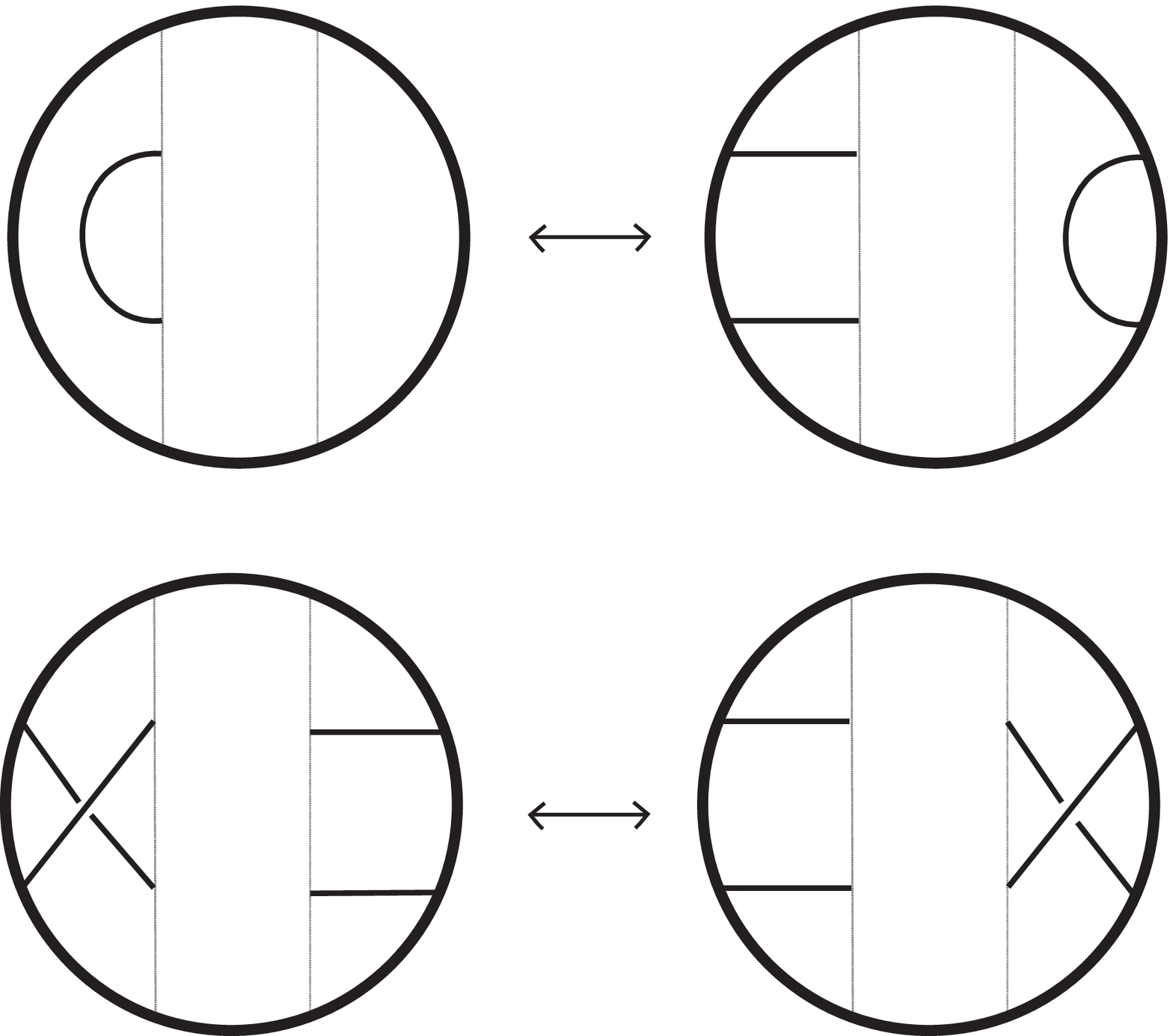}
\label{f2b}
}

\caption{The Reidemeister moves for diagrams of links in \RPthreet.}
\label{f2}
\end{figure}

For brevity we work a little informally in this paragraph, referring the reader to \cite{MR1073213} for details. 
Links in  $\mathbb{R}\mathrm{P}^3$  give rise to diagrams by representing $\mathbb{R}\mathrm{P}^3$ as a ball $D^3$ with antipodal points of its boundary   identified, lifting the link from $\mathbb{R}\mathrm{P}^3$ to  $D^3$ and projecting to the equatorial disc $D^2$. Conversely, given a diagram, regarding  $D^2$ as the equatorial disc of such a representation of  $\mathbb{R}\mathrm{P}^3$, and ``pulling the overcrossings up a little'' gives rise to a link in $\mathbb{R}\mathrm{P}^3$.  With this, we have from   \cite{MR1073213}, that two links in $\mathbb{R}\mathrm{P}^3$ are ambient isotopic if and only if their diagrams are equivalent.

\subsection{Ribbon graphs}\label{s.rg}

\begin{definition}
A {\em ribbon graph} $G =\left(  V(G),E(G)  \right)$ is a (possibly non-orientable) surface with boundary represented as the union of two  sets of  discs, a set $V (G)$ of {\em vertices}, and a set of {\em edges} $E (G)$ such that: 
\begin{enumerate}
\item the vertices and edges intersect in disjoint line segments;
\item each such line segment lies on the boundary of precisely one
vertex and precisely one edge;
\item every edge contains exactly two such line segments.
\end{enumerate}
\end{definition}
An example of a ribbon graph can be found  in Figure~\ref{f1d}, and additional details about them can be found in, for example, \cite{MR3086663,MR1855951}.

Two ribbon graphs are \emph{equivalent} if there is a homeomorphism taking one to the other that sends vertices to vertices, edges to edges, and preserves the cyclic ordering of the edges at each vertex. The homeomorphism should be orientation preserving if the ribbon graphs are orientable. Note that any embedding of a ribbon graph is 3-space is irrelevant.

A ribbon graph is topologically a  surface with boundary and the {\em genus} of a ribbon graph is its genus when it is viewed as a surface. It is {\em orientable} if it is orientable as a surface.  A  ribbon graph is said to be {\em plane} if it is homeomorphic to a  sphere with holes (or equivalently if it is connected and  of genus zero); and  is said to be {\em \RPt} if is homeomorphic to a real projective plane with holes (or equivalently it is connected, non-orientable and of genus one). 

Again since a ribbon graph is a surface with boundary, each ribbon graph $G$ admits a unique (up to homeomorphism) cellular embedding into a closed surface $\Sigma$.  (The cellular condition here means that $\Sigma\backslash G$ is a collection of discs. Using this embedding it is easy to see that ribbon graphs are equivalent to cellularly embedded graphs (in one direction contract the ribbon graph to obtain a graph drawn on the surface, in the other direction take a neighbourhood of the graph in a surface) and so are the main object of topological graph theory. See Figures~\ref{f1d}--\ref{f1f}.


We will make use of the following combinatorial description of ribbon graphs which is due to Chmutov \cite{MR2507944}.
\begin{definition}
An {\em arrow presentation} consists of  a set of closed curves,  each with a collection of disjoint,  labelled arrows, called {\em marking arrows}, lying on them. Each label appears on precisely two arrows.
\end{definition}

A ribbon graph can be obtained from an arrow presentation as follows. View each closed curve as the boundary of a disc (the disc becomes a vertex of the ribbon graph).  Edges are then added to the vertex discs in the following way: take an oriented disc for each label of the marking arrows;  choose two non-intersecting arcs on the boundary of each of the edge discs and direct these according to the orientation; identify these two arcs with two marking arrows, both with the same label, aligning the direction of each arc consistently with the orientation of the marking arrow. This process is illustrated pictorially  in Figure~\ref{f.ap}. 

\begin{figure}[ht]
\centering
\includegraphics[height=10mm]{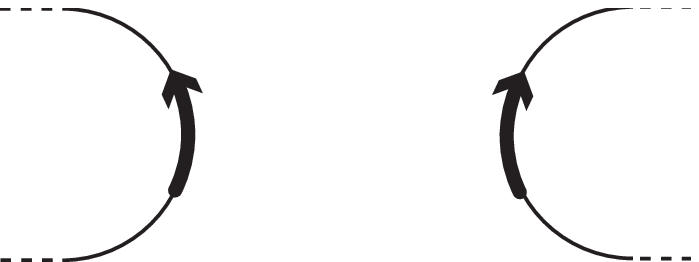} 
\raisebox{4mm}{\hspace{3mm}\includegraphics[width=12mm]{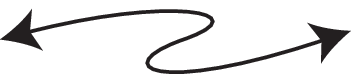}\hspace{3mm}}
\includegraphics[height=10mm]{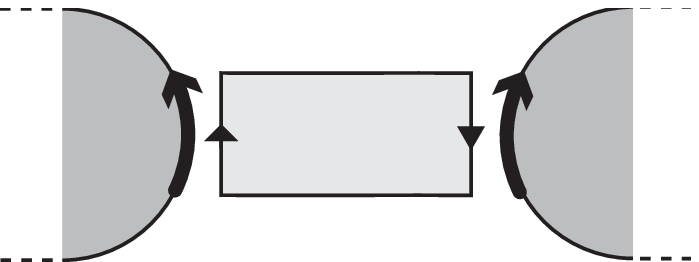} 
\raisebox{4mm}{\hspace{3mm}\includegraphics[width=12mm]{arrow1}\hspace{3mm}}
  \includegraphics[height=10mm]{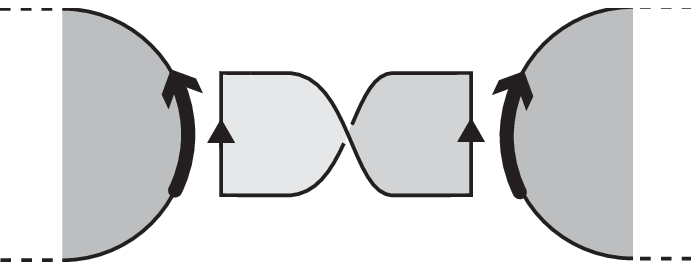} 
  \raisebox{4mm}{\hspace{0mm}\includegraphics[width=12mm]{arrow1}\hspace{3mm}} \includegraphics[height=10mm]{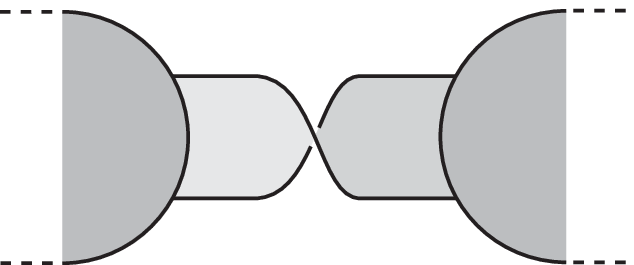}  
  \caption{Moving between arrow presentations and ribbon graphs.}
  \label{f.ap}
\end{figure}

Conversely, to describe a ribbon graph $G$ as an arrow presentation, start by arbitrarily labelling and orienting the boundary of each edge disc of $G$.  On each arc where an edge disc intersects a vertex disc, place an arrow on the vertex disc, labelling the arrow with the label of the edge it meets and directing it consistently with the orientation of the edge disc boundary. The boundaries of the vertex set marked with these labelled arrows give the arrow-marked closed curves of an arrow presentation.  See Figures~\ref{f1c}--\ref{f1d} for an example, and \cite{MR2507944,MR3086663} for further details.

Arrow presentations are {\em equivalent} if they describe equivalent ribbon graphs.

We will need to make use of signed ribbon graphs. A \emph{signed ribbon graph} is a ribbon graph $G$ together with a function from $E(G)$ to $\{+,-\}$. Thus it consists of  a ribbon graph with a sign  associated to each of its edges. Similarly, a \emph{signed arrow presentation} consists of an arrow presentation together with a    function from its set of labels to $\{+,-\}$. Signed ribbon graph and signed arrow presentations are equivalent  in the obvious way.

\section{The ribbon graphs of links in $\mathbb{R}\mathrm{P}^3$}\label{s3}

\subsection{The ribbon graphs of link diagrams}\label{s3a}

We now describe how a set of ribbon graphs can be associated to a link diagram. Let $D$ be a diagram of a link in \RPthreet. 
 Assign a unique label to each crossing of $D$. A {\em marked $A$-splicing} or a   {\em marked $B$-splicing} of a crossing $c$  is the replacement of the crossing with one of the schemes shown in Figure~\ref{f.spl}.

\begin{figure}[ht]
\centering
\subfigure[A Crossing.]{
\labellist
 \small\hair 2pt
\pinlabel {$c$}  at 37 50 
\endlabellist
\quad\quad\includegraphics[scale=0.5]{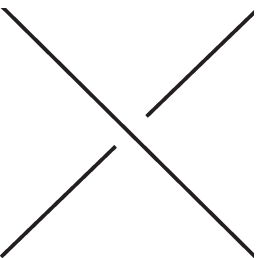} \quad\quad
\label{f.spla}
}
\hspace{5mm}
\subfigure[Its  marked \mbox{A-splicing}. ]{
\labellist
 \small\hair 2pt
\pinlabel {$c-$}  at 30 30  
\pinlabel {$c-$}  at 38 46 
\endlabellist
\quad\quad\includegraphics[scale=0.5]{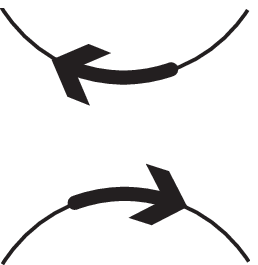} \quad\quad
\label{f.splb}
}
\hspace{5mm}
\subfigure[Its marked \mbox{B-splicing}.]{
\labellist
 \small\hair 2pt
\pinlabel {\rotatebox{-90}{$c+$}}  at 59 31  
 \pinlabel {\rotatebox{-90}{$c+$}}  at 16 38 
\endlabellist
\quad\quad\rotatebox{90}{\includegraphics[scale=0.5]{ch5_28}}\quad\quad
\label{f.splc}
}
\caption{Marked splicings of a link diagram.}
\label{f.spl}
\end{figure}

Notice that we decorate the two arcs in the splicing with signed 
labelled arrows that are chosen to be consistent with an arbitrary orientation of the disc. The labels of the arrows are determined by the label of the crossing, and the signs are 
determined by the choice of splicing.

A {\em state} $\sigma$ of $D$ is the result of  marked $A$- or $B$-splicing  each of its crossings. Observe that a state is  a signed  arrow presentation of a signed ribbon graph. We denote the signed ribbon graph corresponding to the state $\sigma$ of $D$ by $G_{(D,\sigma)}$. These ribbon graphs are the ribbon graphs of a link diagram:
\begin{definition}
Let $D$ be a diagram of a link diagram in \RPthreet. Then the {\em set  of signed ribbon graphs associated with $D$}, denoted $ \G_D$, is defined by 
\[  \G_D =  \{ G_{(D,\sigma)} \mid  \sigma \text{ is a marked state of } D   \}.  \]   
If $G\in \G_D$ then we say that $G$ is a {\em signed ribbon graph of $D$}. We  will also say that $G$ {\em represents} $D$.
\end{definition}
An example of a ribbon graph $G_{(D,\sigma)}$ for a state $\sigma$ of a link diagram $D$ is given in Figures~\ref{f1a}--\ref{f1d}. The construction of $ \G_D$ is a direction extension of the construciton for classical links from \cite{MR2389605,MR925987}.

If $D$ is checkerboard coloured, then we can construct a signed ribbon graph of $D$ by choosing the splicing that follows the black regions at each crossing. The resulting signed ribbon graph is called a \emph{Tait graph} of $D$. If $D$ is checkerboard colourable, then it has exactly two Tait graphs, one corresponding to each of the two checkerboard colourings. 

\begin{proposition}\label{p.tg}
Let $D$ be a checkerboard colourable diagram of a link in \RPthreet. Then its Tait graphs are either plane or \RP ribbon graphs. 
\end{proposition}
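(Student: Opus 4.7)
The plan is to prove the proposition by reducing to the classical (spherical) case via the double cover $p \colon S^2 \to \rp$. Representing \RPthree as $D^3$ with antipodal identifications on $\partial D^3$ as in the paper, the equatorial \RP is double-covered by $S^2$, so $D$ lifts to a classical link diagram $\tilde{D}$ on $S^2$. The checkerboard colouring of $D$ pulls back to a checkerboard colouring of $\tilde{D}$, so $\tilde{D}$ has a well-defined Tait graph $\tilde{G}$; by the classical fact that Tait graphs of checkerboard-coloured classical link diagrams are plane, $\tilde{G}$ is a plane ribbon graph.

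Next, I would observe that Tait splicing is performed locally at each crossing, so its construction commutes with the double cover: $\tilde{G}$ is the $\mathbb{Z}/2$-double cover of $G$, with the action induced by the deck transformation $\tau$ (the antipodal map on $S^2$, which is free and orientation-reversing). The quotient $G = \tilde{G}/\langle \tau \rangle$ is then analysed component by component. For a connected component of $G$ with preimage $\tilde{G}_0 \subseteq \tilde{G}$, there are two cases. If $\tilde{G}_0$ has two components, they are swapped by $\tau$ and the component of $G$ is homeomorphic to either one, which is a sphere with holes, hence a plane ribbon graph. If $\tilde{G}_0$ is connected, then the component of $G$ is a free orientation-reversing $\mathbb{Z}/2$-quotient of a planar surface, and so is non-orientable.

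In the second case, a short Euler characteristic argument will fix the non-orientable genus. Writing $b'$ for the number of boundary circles of $\tilde{G}_0$, we have $\chi(\tilde{G}_0) = 2 - b'$ and so $\chi(G_0) = (2-b')/2$, where $G_0$ denotes the component of $G$ under consideration. Each boundary circle of $G_0$ has preimage either two circles (trivial cover) or one circle (connected cover); letting $b_1, b_2$ denote the respective counts so that $b = b_1 + b_2$ and $b' = 2b_1 + b_2$, and setting $\chi(G_0) = 2 - k - b$ with $k$ the non-orientable genus, we find $k = 1 - b_2/2$. Since $k \geq 1$ for non-orientable surfaces, this forces $b_2 = 0$ and $k = 1$, so $G_0$ is an \RP ribbon graph. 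The main obstacle I anticipate is this Euler-characteristic bookkeeping, and in particular verifying that the deck transformation really does act on boundary circles in only the two ways claimed (so that $b_2$ is forced to be even and the non-orientable genus cannot exceed one).
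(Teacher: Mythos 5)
Your route is genuinely different from the paper's. The paper argues directly: for a checkerboard coloured diagram that is not null-homologous all regions are discs, so the Tait graph is cellularly embedded in the net and is therefore an $\rp$ ribbon graph, and the null-homologous case is reduced to a cellular embedding in the sphere by capping off the M\"obius-band face. You instead pull everything back through the orientation double cover $S^2\to\rp$, invoke classical planarity of Tait graphs upstairs, and descend via covering-space theory; the Euler-characteristic bookkeeping ($b'=2b_1+b_2$, $k=1-b_2/2$, hence $b_2=0$ and $k=1$) is correct. However, two steps need genuine justification before this works.

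First, it is not automatic that $\tilde G\to G$ is an honest \emph{free} double cover. The vertices of $G$ are abstract discs glued to the curves of the Tait state; if some state curve were $1$-homologous in $\rp$, its preimage would be a single circle doubly covering it, and the corresponding vertex disc of $\tilde G$ could not carry a free involution extending the antipodal action on its boundary (a free involution of $S^1$ does not extend to a fixed-point-free involution of $D^2$). The map would then only be a \emph{branched} double cover and $\chi(\tilde G_0)=2\chi(G_0)$ would fail. So you must check that every Tait-state curve is $2$-sided, hence null-homologous, in $\rp$; this is true because each such curve bounds the union of black regions and crossing bands on one side, but it has to be said, since it is exactly where the Tait (as opposed to an arbitrary) state enters. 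Second, ``orientation-reversing'' does not transfer for free from the antipodal map on $S^2$ to the induced involution on the \emph{abstract} ribbon graph $\tilde G_0$, because the vertex discs need not be subsurfaces of $S^2$ (a merged black face need not be a disc); and a connected planar double cover can certainly have an orientable quotient in general (rotate an annulus by $\pi$). Fortunately you do not need this claim for the statement at hand: if $G_0$ were orientable of genus $g$ the same computation gives $4g=2-b_2$, forcing $g=0$, so $G_0$ is plane in that subcase and the dichotomy holds either way. Finally, the obstacle you single out is not one: a free involution of a disjoint union of circles either swaps a circle with another or restricts to a free involution of it, so only your two behaviours occur, and nothing forces $b_2$ to be even beyond the equation $k=1-b_2/2$ itself.
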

\begin{proof} Checkerboard colour $D$ and let $G$ be its Tait graph. 
If $D$ is not null-homologous then all of its regions are discs. Since the marked splicings follow the black regions and the black regions are discs, we can embed $G$ in \RP by taking the black regions bounded by the curves of the splicings as vertices, and embedding the edge disc  between the pairs of labelled arrows in the obvious way. Since $D$ is checkerboard coloured, all regions of the embedded ribbon graph are discs, and no two face regions or vertex regions share a boundary. Thus $G$ is cellularly embedded in the net and is therefore \RPt.  

If $D$ is null-homologous replace the  face of its net that is a M\"obius band with a disc to obtain a diagram on the sphere, and repeat the above  argument with this embedding.  
\end{proof}

We note that it follows from the proof of Proposition~\ref{p.tg} that the Tait graphs defined here coincide with the `usual' Tait graphs obtained by placing vertices in black regions and embedding edges through each crossing.

\begin{remark}\label{rma1}
One of the significant applications of the ribbon graphs of links is that they provide a way to connect graph and knot polynomials.
A seminal result of Thistlethwaite \cite{MR899051},  expresses  the Jones polynomial of an alternating classical link  as an evaluation of the Tutte polynomial of either of  its Tait graphs. There have been several recent extensions of this result that express the Jones polynomial and Kauffaman bracket of virtual and classical links as evaluations of Bollob\'as and Riordans extension of the Tutte polynomial to ribbon graphs, see 
\cite{MR2994589,MR2507944,MR2343139,MR2460170,MR2389605,MR2558980,MR2785760}.
    
The Kauffman bracket and Jones polynomial of links in \RPthree can similarly be expressed in terms of the (multivariate) Bollob\'as-Riordan polynomial of ribbon graphs that represent their diagrams. In fact the statement and proofs of the results for links in  \RPthree follow those for the existing results with almost no change. Accordingly we only remark here that they hold.  Following the notation of the exposition \cite{MR3086663} gives that  for a diagram $D$ of a link in \RPthreet, 
    \[ \langle D\rangle = d^{k(\A)-1} A^{n(\A)-r(\A)}  R(\A; -A^{4}, A^{-2}d, d^{-1},1),  \]
and 
   \[  \langle D\rangle = d^{-1} A^{e_-(G_D)-e_+(G_D)}  Z(G_D; 1, \mathbf{w}, d,1), \quad \text{where } w_e= 
\begin{cases} A^{-2} & \text{if $e$ is negative,}
\\
A^{2} &\text{if $e$ is positive.}
\end{cases}
 \]
In these equations, $\langle D\rangle$ is the Kauffman bracket of \cite{MR1073213},  $d=-A^2-A^{-2}$,  $\A$ is the ribbon graph of $D$ obtained by choosing the A-splicing at each crossing, $R$ is the Bollob\'as-Riordan polynomial of \cite{MR1906909}, and $Z$ is the multivariate Bollob\'as-Riordan polynomial of \cite{MR2368618}. These identities can be obtained by following Section~5.4.2 of  \cite{MR3086663}.

Furthermore a connection between the  Bollob\'as-Riordan polynomial and  the  {\small HOMFLY-PT} of links in \RPthree from \cite{MR2128052}, that is analogous to Jaeger's connection of \cite{MR943099}    between  the Tutte polynomial of a plane graph and the   {\small HOMFLY-PT} polynomial of a classical link (see also \cite{MR2869131,MR2368618,MR955462}), can also be found: 
\[
P(\mathcal{L}(G); x,y ) = \left( \frac{1}{xy} \right)^{v(G)-1}
 \left( \frac{y}{x}\right)^{e(G)}
\left(  x^{2}-1  \right)^{k(G)-1}
  R\left(G; x^{2} , \; \frac{x-x^{-1}}{xy^2},\; \frac{y}{x - x^{-1}}\right).
\]
Again the notation here  is from  \cite{MR3086663}, and the result can be obtained by following Section~5.5.2 of that text.
\end{remark}

\subsection{Relating  link diagrams with the same ribbon graph}
As mentioned in the introduction, two diagrams can give rise the the same set of signed ribbon graphs. That is, it is possible that $D\neq D'$ but   $\G_{D}= \G_{D'}$.  
A fundamental question is then if $D$ and $D'$ are diagrams such that   $\G_{D}= \G_{D'}$,  how are  $D$ and $D'$ related? Here we answer this question in the case when  $D$ and $D'$ are both checkerboard colourable. To describe the result, we need to introduce some notation.

\begin{definition}\label{d.lflip}
Let $D$ and $D'$ be diagrams of links in \RPthreet. We say that $D$ and $D'$ are related by a  {\em summand flip} if  $D'$ can be obtained from $D$ by the following process: orient the disc $D^2$ and choose a disc $\mathfrak{D}$ in $D^2$ whose boundary intersects $D$ transversally in exactly two points $a$ and $b$. Cut out   $\mathfrak{D}$ and glue it back in  such a way that the orientations of $\mathfrak{D}$ and $D^2\bs \mathfrak{D}$ disagree, the points $a$ on the boundaries of $\mathfrak{D}$ and $S^2\bs  \mathfrak{D}$ are identified, and  the points $b$ on the boundaries of $\mathfrak{D}$ and $S^2\bs \mathfrak{D}$ are identified. See Figure~\ref{c5.s6.ss1.f6}. 
We  say that two  link diagrams $D$ and $D'$   are {\em related by summand-flips} if  there is a sequence of summand-flips and R0-moves taking $D$ to $D'$.
\end{definition}

\begin{figure}[ht]
\begin{center}
\begin{tabular}{ccccc}
\labellist
 \small\hair 2pt
\pinlabel {$D$}  at 23 60 
\pinlabel {$D'$}  at 115 60
\endlabellist
\includegraphics[height=2cm]{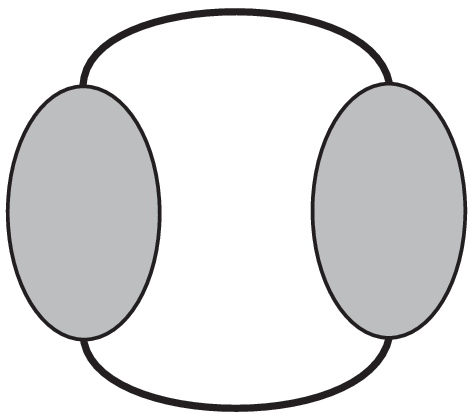} & \raisebox{7mm}{\includegraphics[width=1cm]{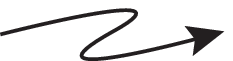} }  & 
\labellist
 \small\hair 2pt
\pinlabel {$D$}  at 23 60 
\pinlabel {$D'$}  at 115 60
\endlabellist
\includegraphics[height=2cm]{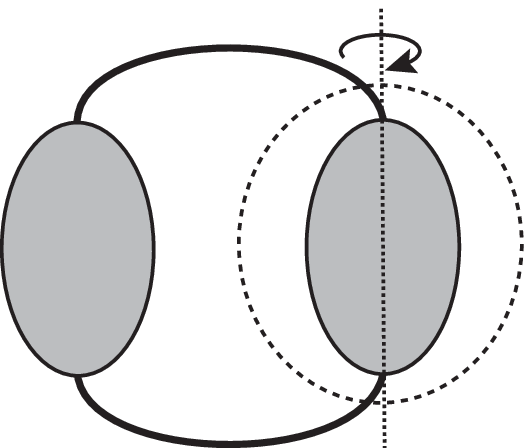}& \raisebox{7mm}{\includegraphics[width=1cm]{ch5_0}}  & 
\labellist
 \small\hair 2pt
\pinlabel {$D$}  at 23 60 
\pinlabel {\reflectbox{$D'$}}  at 115 60
\endlabellist
\includegraphics[height=2cm]{ch5_71} \\
 $D_1$  &&  cut, flip and glue   && $ D_2$
\end{tabular}
\end{center}
\caption{A summand-flip.}
\label{c5.s6.ss1.f6}
\end{figure}

Our first main result is the following.
\begin{theorem}\label{mainthm}
Let $D$ and $D'$ be checkerboard colourable  diagrams of links in  $\mathbb{R}\mathrm{P}^3$. Then $\mathbb{G}_D = \mathbb{G}_{D'}$ if and only if $D$ and $D'$ are related by summand flips. 
\end{theorem}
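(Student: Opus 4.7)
The plan is to prove the two directions separately, using the Tait graph as a canonical bridge for the harder direction.

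For sufficiency, I would show that a single summand-flip across a disc $\mathfrak{D}$ induces a bijection between the marked states of $D$ and those of $D'$ under which corresponding states produce equivalent signed arrow presentations. Every crossing of $D$ is preserved by the flip, so each state $\sigma$ of $D$ gives a state $\sigma'$ of $D'$ with the same $A$/$B$-choice at each crossing. The marking arrows are preserved because the orientation reversal on $\mathfrak{D}$ is cancelled by the reversal of each arrow inside $\mathfrak{D}$, so labels and signs match. The cyclic orders of arrows along the closed curves of the arrow presentation also coincide, up to reversing the sense of traversal on any closed curve that runs through $\mathfrak{D}$, which is an allowed equivalence of presentations. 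Hence $G_{(D,\sigma)}=G_{(D',\sigma')}$ and $\mathbb{G}_D=\mathbb{G}_{D'}$.

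For necessity, I would first strengthen Proposition~\ref{p.tg} to a characterisation: the Tait graphs of $D$ are exactly the plane and $\mathbb{R}\mathrm{P}^2$ elements of $\mathbb{G}_D$. Proposition~\ref{p.tg} supplies one inclusion; for the converse, if a state $\sigma$ produces a plane or $\mathbb{R}\mathrm{P}^2$ ribbon graph, then the splicing curves of $\sigma$ cut the net of $D$ into discs (together with a single M\"obius face in the null-homologous case), and two-colouring these regions yields a checkerboard colouring of $D$ of which $\sigma$ is the Tait state. Consequently $\mathbb{G}_D=\mathbb{G}_{D'}$ forces $D$ and $D'$ to have a common Tait graph $G$, and it remains to show that two diagrams with the same Tait graph are related by summand-flips.

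A diagram with Tait graph $G$ is built by cellularly embedding $G$ in the net, choosing a line at infinity disjoint from $G$, cutting along it to recover the disc, and replacing each edge ribbon with a signed crossing. The signed ribbon graph $G$, together with its embedding surface (which is unique up to homeomorphism), determines all the data except the choice of cellular embedding and the choice of line at infinity, so it suffices to show that varying these two choices produces diagrams related by summand-flips. In the plane subcase this is the classical result of \cite{MR2928906}. I would reduce the $\mathbb{R}\mathrm{P}^2$ subcase to it by slicing the net along an arc transverse to $G$ running between two strategically chosen faces, which converts the $\mathbb{R}\mathrm{P}^2$ ribbon graph into a plane ribbon graph in a disc; reassembling across the slice introduces a summand-flip whose disc $\mathfrak{D}$ straddles the line at infinity, and iterating this reduction handles arbitrary pairs of embeddings. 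I expect the main obstacle to be verifying that each reassembly step is a summand-flip in the precise sense of Definition~\ref{d.lflip}, and treating the degenerate configurations where the slicing arc meets the line at infinity or a vertex disc of $G$ in non-generic ways.
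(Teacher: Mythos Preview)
Your sufficiency argument is fine and matches the paper's (which dismisses that direction in a single sentence).

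The necessity argument, however, rests on a false characterisation. You claim that the Tait graphs of $D$ are \emph{exactly} the plane or $\mathbb{R}\mathrm{P}^2$ members of $\mathbb{G}_D$, but in general $\mathbb{G}_D$ contains many more plane or $\mathbb{R}\mathrm{P}^2$ ribbon graphs than just the two Tait graphs. For a concrete counterexample, take a connected sum $D=D_1\# D_2$ of two classical alternating diagrams with (plane) Tait graphs $G_1$ and $G_2$. The Tait graphs of $D$ are $G_1\vee G_2$ and its geometric dual $G_1^*\vee G_2^*$. But $G_1^*\vee G_2$ is also plane and lies in $\mathbb{G}_D$ (it is the partial dual of $G_1\vee G_2$ with respect to $E(G_1)$), and it is \emph{not} a Tait graph of $D$: it does not come from either of the two checkerboard colourings of $D$. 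Worse, your conclusion from the characterisation fails too: if $D'$ is obtained from $D$ by the summand-flip that dualises the $G_1$ side, then the Tait graphs of $D'$ are $\{G_1^*\vee G_2,\; G_1\vee G_2^*\}$, which is disjoint from the Tait-graph set of $D$. So $\mathbb{G}_D=\mathbb{G}_{D'}$ does \emph{not} force $D$ and $D'$ to share a Tait graph, and your step~3 (``two diagrams with the same Tait graph are related by summand-flips'') never gets off the ground.

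The paper handles necessity quite differently. From $\mathbb{G}_D=\mathbb{G}_{D'}$ and checkerboard colourability it extracts plane or $\mathbb{R}\mathrm{P}^2$ ribbon graphs $G$ and $G^A$ with $D=D_G$ and $D'=D_{G^A}$ (Corollary~\ref{c.dd}); these are partial duals but need not be equal. It then invokes an external structural result (Theorem~\ref{t.sim}, from \cite{MR2928906,MR2994404}) saying that two plane or $\mathbb{R}\mathrm{P}^2$ ribbon graphs are partial duals if and only if they are related by \emph{dual-of-a-join-summand} moves $P\vee Q \leftrightarrow P\vee Q^*$. The heart of the proof is then Lemma~\ref{lmain}, which shows that a single dual-of-a-join-summand move on the ribbon graph induces a summand-flip on the associated diagram. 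Thus the key mechanism is precisely the passage between \emph{different} plane or $\mathbb{R}\mathrm{P}^2$ members of $\mathbb{G}_D$ via partial duality, which your approach tried to rule out.
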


Before proving Theorem~\ref{mainthm}, we note that the requirement that the link diagrams are checkerboard colourable is essential to our approach, and we pose the following.
\begin{open}
Let $D$ and $D'$ be diagrams of links in  $\mathbb{R}\mathrm{P}^3$ (that are not necessarily checkerboard colourable). Determine necessary and sufficient conditions for   $\mathbb{G}_D$ and $ \mathbb{G}_{D'}$ to be equal.
\end{open}


To prove Theorem~\ref{mainthm} we need to be able to recover link diagrams from ribbon graphs. Given a signed $\rp$ or plane ribbon graph  it is straight-forward to recover a link diagram that it represents.  Let  $G$ be a signed $\rp$ ribbon graph, fill in the holes to obtain a cellular embedding of it in \RPt, as in Section~\ref{s.rg}.  Represent \RP as a disc $D^2$ with antipodal points identified, and lift the embedding of $G$ to a drawing on $D^2$. Finally, draw the  configuration of Figure~\ref{f.rgtol} on each of its edges, and connect the configurations by following the boundaries of the vertices of $G$, to obtain the link diagram. See Figure~\ref{f3} for an example.

\begin{figure}[ht]
   \includegraphics[height=1cm]{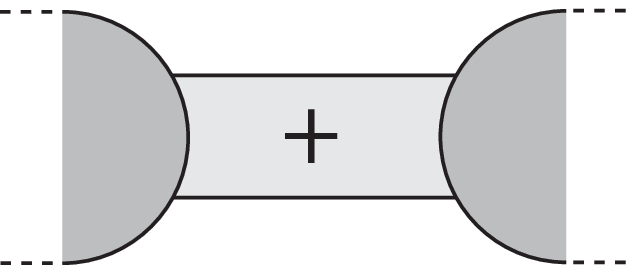}  \raisebox{4mm}{\includegraphics[width=1cm]{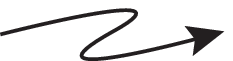}}  \includegraphics[height=1cm]{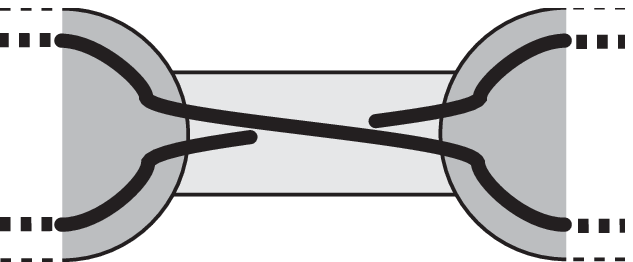}  \hspace{3mm}\raisebox{4mm}{and}  \hspace{3mm}
 \includegraphics[height=1cm]{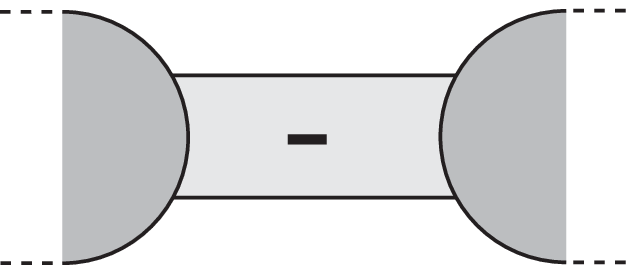}   \raisebox{4mm}{\includegraphics[width=1cm]{arrow}} \includegraphics[height=1cm]{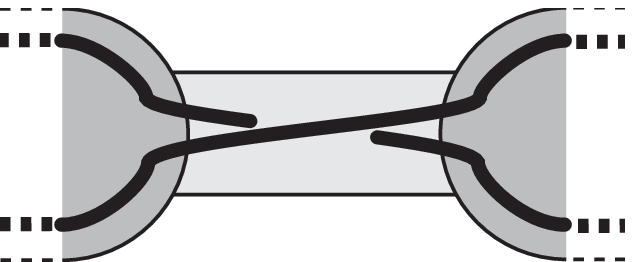}   
\caption{Forming a diagram $D_G$ from a signed ribbon graph $G$.}
\label{f.rgtol}
\end{figure}
If   $G$ is a signed plane ribbon graph, fill in all but one of the the holes to obtain a cellular embedding of it in a disc $D^2$.  Drawing the  configuration of Figure~\ref{f.rgtol} on each of its edges, and connecting the configurations by following the boundaries of the vertices of $G$ gives the required  link diagram. In either case we denote the resulting diagram of a link in \RPthree by $D_G$.

\begin{figure}[ht]
\centering
\subfigure[A ribbon graph $G$.]{
\labellist
 \small\hair 2pt
\pinlabel {1+}  at 150 110
\pinlabel {2-}  at 113 15
\pinlabel {3+}  at 50 129
\endlabellist
\includegraphics[scale=.5]{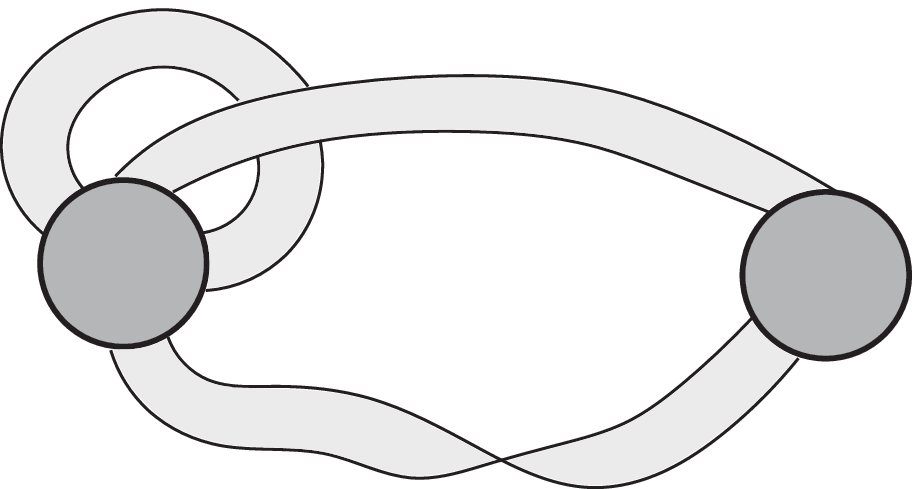}
\label{f3a}
}
\hspace{17mm}
\subfigure[A partial dual $G^{\{3\}}$ of $G$. ]{
\labellist
 \small\hair 2pt
\pinlabel {1+}  at 105 144
\pinlabel {2-}  at 160 100
\pinlabel {3-}  at 68 72
\endlabellist
\includegraphics[scale=.45]{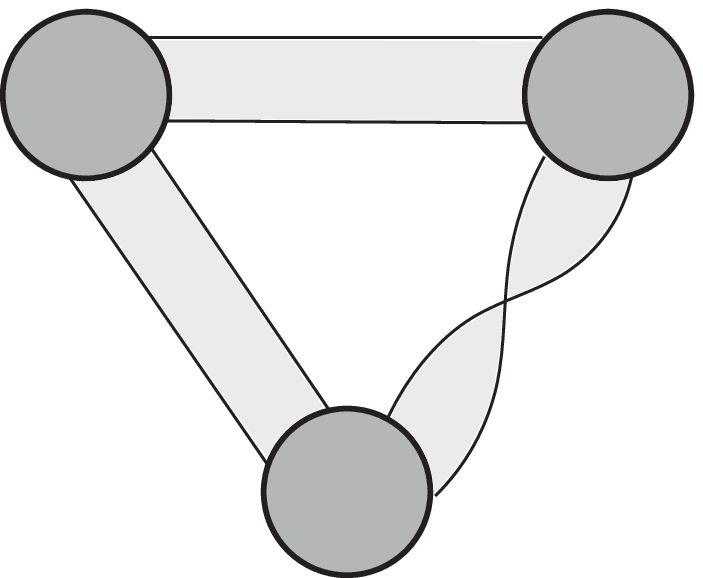}
\label{f3b}
}

\subfigure[Drawing $G^{\{3\}}$ in a disc.]{
\labellist
 \small\hair 2pt
\pinlabel {3-}  at 133 102
\pinlabel {2-}  at 185 102
\pinlabel {1+}  at 78 102
\endlabellist
\includegraphics[scale=.65]{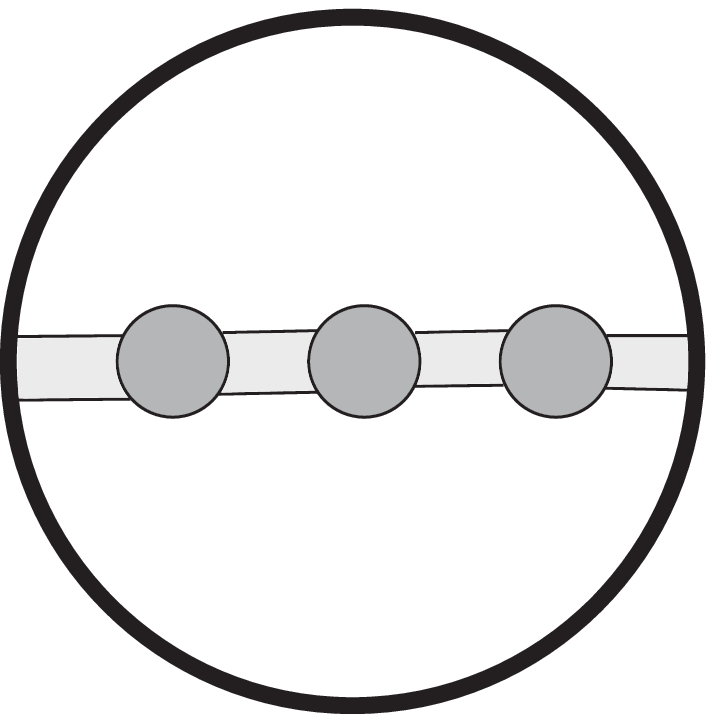}
\label{f3c}
}
\hspace{17mm}
\subfigure[Recovering $D_{G^{\{3\}}}$. ]{
\includegraphics[scale=.65]{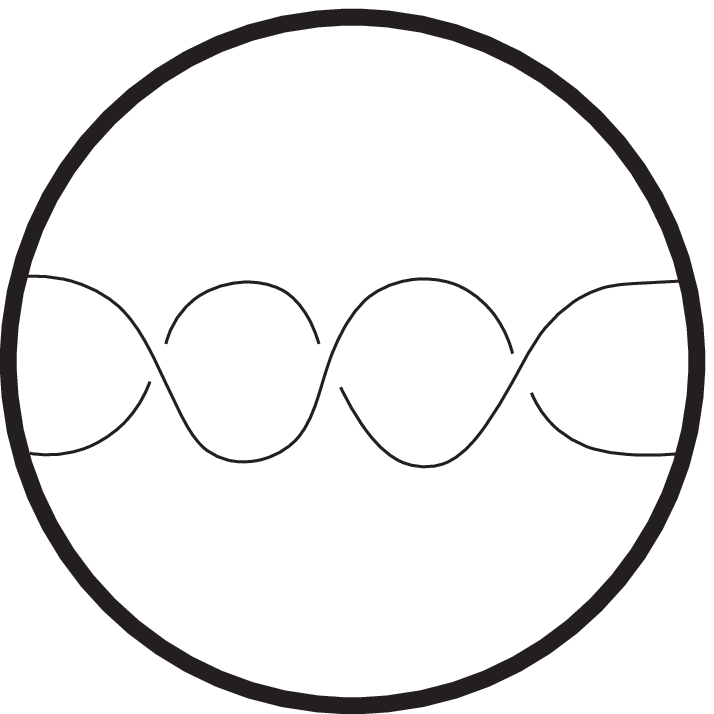}
\label{f3d}
}
\caption{Recovering a link diagram from a ribbon graph.}
\label{f3}
\end{figure}

\begin{proposition}\label{p.1}
 Let $G$ be a signed \RP or plane ribbon graph. Then 
$D_G$ is checkerboard colourable.
\end{proposition}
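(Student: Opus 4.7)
The plan is to exhibit an explicit checkerboard colouring of $D_G$ by defining the colouring first on the whole closed surface $\Sigma$ into which $G$ cellularly embeds (so $\Sigma$ is the sphere if $G$ is plane and is $\rp$ if $G$ is \RPt), and then passing this colouring down to the ambient disc or net. The cellular embedding decomposes $\Sigma$ into vertex discs, edge ribbons, and face discs, and $D_G$ is obtained by placing the crossing of Figure~\ref{f.rgtol} in each edge ribbon and joining the resulting strand endpoints along the boundaries of the vertex discs.

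First I would define the colouring on $\Sigma$: declare every vertex disc black and every face disc white. The crossing in each edge ribbon cuts it into four quadrants; of these, two abut the gluing arcs where the edge meets the vertex discs at its endpoints, and two abut the free arcs where the edge meets a face disc on either side. Colour the first pair black and the second pair white. Each region of $\Sigma \setminus D_G$ then receives a well-defined colour: the two black quadrants of an edge ribbon merge with the adjacent vertex disc interiors across the gluing arcs (which carry no arcs of $D_G$), the two white quadrants merge with the adjacent face disc interiors across the free arcs (which likewise carry no arcs of $D_G$), and the four quadrants at each crossing are separated from one another by strands of $D_G$.

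Next I would verify the alternation condition at each crossing. This is local and immediate from the construction: the two black (vertex-adjacent) quadrants of an edge ribbon sit at opposite corners of the crossing and the two white (face-adjacent) quadrants sit at the remaining opposite corners, so the four incident regions alternate in colour.

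Finally, I would descend to $D_G$. When $G$ is plane, $D_G$ is drawn in the disc $D^2$ obtained by leaving one face of $S^2$ unfilled, and the colouring on $S^2$ restricts to a checkerboard colouring of $D_G$ in $D^2$. When $G$ is \RPt, the colouring is defined directly on $\rp$, which is the net $\mathcal{N}_{D_G}$, so it is a checkerboard colouring of $\mathcal{N}_{D_G}$ and hence of $D_G$ by definition. The only thing one might worry about is a colouring conflict across the line at infinity in the \RPt case, but because the colouring is constructed intrinsically on $\rp$ from the cellular embedding alone, no such conflict can arise; so this is not a genuine obstacle, and the argument is really just an exercise in bookkeeping the local picture of Figure~\ref{f.rgtol} against the vertex/face decomposition of $\Sigma$.
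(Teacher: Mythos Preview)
Your argument is correct and is essentially the same as the paper's: the paper's proof is the single sentence ``colour the regions of $D_G$ that correspond to the vertices of the ribbon graph black,'' and you have written out in detail exactly why that assignment is well-defined and alternates at each crossing. The extra bookkeeping you do with the edge-ribbon quadrants and the descent from $\Sigma$ to the disc or net is not needed for a proof at the level the paper is pitched, but it is all accurate.
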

\begin{proof}
This follows by colouring the regions of $D_G$ that correspond to the vertices of the ribbon graph black.
\end{proof}

To recover a link diagram from a ribbon graph that is not plane or \RP requires  more work, and for our application, Chmutov's concept of a partial dual of a ribbon graph from \cite{MR2507944}.  The idea behind a partial dual is to form the geometric dual of an embedded graph but with respect to only some of its edges. We approach partial duals and geometric duals via arrow presentations as this is particularly convenient for us here. Other descriptions of partial duality can be found in, for example, \cite{MR2507944,MR3086663}. 
\begin{definition}\label{c2.d2} 
Let $G$ be a ribbon graph viewed as an arrow presentation, and let $A\subseteq E(G)$. Then the {\em partial dual} $G^A$ of $G$ with respect to $A$ is the arrow presentation (or ribbon graph) obtained as follows.  For each $e\in A$,  suppose $\alpha$ and $\beta$ are the two arrows labelled $e$ in the arrow presentation of $G$.  Draw a line segment with an arrow on it directed from the head of $\alpha$ to the tail of $\beta$, and a line segment with an arrow on it directed from the head of $\beta$ to the tail of $\alpha$.  Label both of these arrows $e$, and delete $\alpha$ and $\beta$ and the arcs containing them. This process is illustrated locally at a pair of arrows in Figure~\ref{c2.Eop}. The ribbon graph $G^{E(G)}$ is the \emph{geometric dual} of $G$.

If $G$ is a signed ribbon graph then $G^A$ is also a signed ribbon graph with the signs of $G^A$ given by the rule that if an edge $e$ of $G$ has sign $\varepsilon \in \{+,-\}$, then the corresponding edge in $G^A$ has sign  
 $-\varepsilon$ if $e\in A$, and 
  $\varepsilon$ if $e\notin A$. (Thus taking the dual of an edge toggles its sign.)
\end{definition}

\begin{figure}[ht]
\centering
\begin{tabular}{ccc}
\labellist \small\hair 2pt
\pinlabel {$e$} at  52 45   
\pinlabel {$e$}  at   82 30
\endlabellist
 \includegraphics[scale=.5]{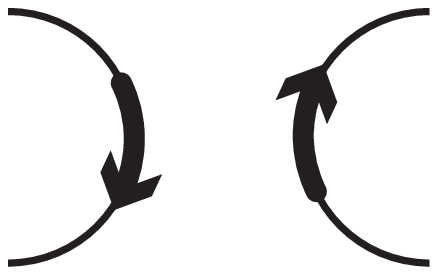} 
 &\quad \raisebox{6mm}{\includegraphics[scale=.4]{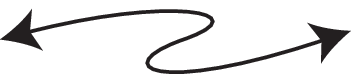}}\quad \quad&
 \labellist \small\hair 2pt
\pinlabel {$e$} at  75 68   
\pinlabel {$e$}  at   55 12
\endlabellist
\includegraphics[scale=.5]{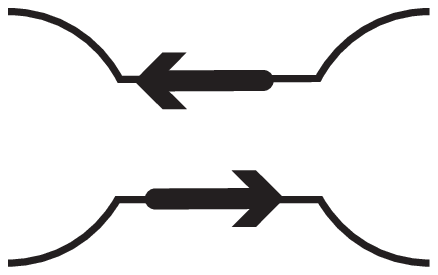}
 \\
$e\in G$ & &$e\in G^{\{e\}}$
\end{tabular}
\caption{Taking the partial dual of an edge in an arrow presentations.}
\label{c2.Eop}
\end{figure}

Figures~\ref{f3a} and \ref{f3b} give an example of a partial dual.
 
We will need the following properties of partial duals from \cite{MR2507944}.
\begin{proposition}\label{c2.p2}
Let $G$ be a (signed) ribbon graph and $A, B\subseteq E(G)$.  Then the following  hold.
\begin{enumerate}
\item \label{c2.p2.1} $G^{\emptyset}=G$.
\item \label{c2.p2.2}  $G^{E(G)}=G^*$, where $G^*$ is the geometric dual of $G$.
\item \label{c2.p2.3} $(G^A)^B=G^{(A\triangle B)}$, where $A\triangle B = (A\cup B)\backslash  (A\cap B)$ is the symmetric difference of $A$ and $B$.
\item \label{c2.p2.4} $G$ is orientable if and only if $G^{A}$ is orientable.
\end{enumerate}
\end{proposition}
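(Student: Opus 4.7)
The plan is to verify each property directly from the arrow-presentation description in Definition~\ref{c2.d2}. Part (1) is immediate: when $A=\emptyset$ no arrows are modified and no signs are toggled, so $G^{\emptyset}=G$.

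For (2), the key observation is that the faces of the cellular embedding of $G$ coincide with the boundary components of $G$ viewed as a surface with boundary. Following the boundary of $G$ amounts to travelling along the vertex curves between consecutive arrows and, at each pair of arrows labelled $e$, crossing the corresponding edge disc from the head of one arrow to the tail of the other—exactly the paths drawn by the surgery in Definition~\ref{c2.d2}. Hence the closed curves produced by $G^{E(G)}$ are precisely the face boundaries of $G$, and these become the vertex curves of the dual. Combined with the sign-toggle convention this yields $G^{E(G)}=G^{\ast}$.

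For (3) the heart of the argument is that dualizing a single edge is an involution and that dualizations on distinct edges commute. Both facts are visible from Figure~\ref{c2.Eop}. For involutivity, inspect the arrows added by $(\cdot)^{\{e\}}$: their heads and tails sit at the tails and heads of the deleted arrows, so a second application of $(\cdot)^{\{e\}}$ undoes the surgery and restores the original arrows and arcs. For commutativity, note that the construction at edge $e$ only alters arrows carrying the label $e$ and arcs containing them, so $(\cdot)^{\{e\}}$ and $(\cdot)^{\{f\}}$ commute whenever $e\neq f$. Iterating single-edge duals over $A$ and then over $B$ therefore dualizes each edge of $A\triangle B$ exactly once and each edge of $A\cap B$ an even number of times, so the outcome is $G^{A\triangle B}$. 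The sign rule is consistent with this count since $-(-\varepsilon)=\varepsilon$ and the symmetric difference records exactly the edges whose sign is toggled an odd number of times.

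For (4) I would use the following combinatorial characterization of orientability in terms of arrow presentations: $G$ is orientable iff one can orient each vertex curve so that, for every label $e$, of the two arrows labelled $e$, either both agree with or both oppose the chosen vertex-curve orientation (this is exactly the condition for the edge disc attached along them to glue in an orientation-compatible way). Granted this characterization, the local surgery replacing the arrows of $e\in A$ with arrows whose directions are inherited from the heads and tails of the originals preserves the compatibility condition, because the replacement arcs can be chosen to match the vertex-curve orientation in the same sense as the arcs they replace. Since by (3) partial duality can be built up one edge at a time, orientability is preserved at each step, giving the equivalence. The main obstacle here is stating the orientability criterion in a form where the check is essentially tautological; once the criterion is in place, the verification is entirely local at each edge.
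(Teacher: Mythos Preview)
The paper does not prove this proposition at all: it is stated as a result ``from \cite{MR2507944}'' and used as a black box. So there is no proof in the paper to compare your attempt against.

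That said, your approach is the standard one and parts (1)--(3) are fine. Your argument for (3) is slightly glib in one place: the surgery at $e$ does change the global curve structure (arrows for other labels may end up on different closed curves after the cut-and-reconnect), so ``only alters arrows carrying the label $e$ and arcs containing them'' is not literally true. What is true, and what you need, is that the surgery is supported in a small neighbourhood of the two $e$-arrows and leaves the rest of the arrow-marked curves set-theoretically unchanged; commutativity with $(\cdot)^{\{f\}}$ for $f\neq e$ then follows because the two surgeries have disjoint supports.

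Part (4) is where your write-up is weakest. The orientability criterion you state is correct, but the sentence ``the replacement arcs can be chosen to match the vertex-curve orientation in the same sense as the arcs they replace'' hides the actual check. After the surgery at $e$ the closed curves are rebuilt from arcs of the old curves together with the two new segments, and you must explain how to orient the \emph{new} curves (not just the new segments) so that every label---including $e$ itself---still satisfies the parity condition. This can be done: transport the old orientations along the surviving arcs and observe that the two new $e$-arrows inherit consistent directions from the heads and tails of the deleted ones. But you should spell out that the pieces reassemble into consistently oriented closed curves, since that is exactly where a non-orientable example would fail. Once that local check is made precise, iterating via (3) gives the result.
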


We emphasise that the construction of the geometric dual $G^*$ of $G$ agrees with the usual graph theoretic construction of the geometric dual of a cellularly embedded graph in which  a cellularly embedded graph $G^*$ is  obtained from a cellularly embedded graph $G$ by  placing one
vertex in each of its faces, and embedding an edge of $G^*$ between two of these
vertices  whenever the faces of $G$ they lie in are adjacent, and the edges of $G^*$ are embedded so that they cross the corresponding face boundary (or edge of $G$) transversally.

\begin{proposition}\label{p.2}
 Let $G$ be a signed \RP or plane ribbon graph. Then 
$D_G = D_{G^*}$.
\end{proposition}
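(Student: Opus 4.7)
The plan is to verify that forming the link diagram is invariant under geometric duality, by analysing the construction locally. Recall from the discussion following Proposition~\ref{c2.p2} that $G$ and $G^*$ share a common cellular embedding into the same surface $\Sigma$ (a copy of \RPt or a sphere, according to whether $G$ is \RP or plane); under this embedding, each edge $e$ of $G$ is crossed transversally by a unique edge $e^*$ of $G^*$, while the vertices of $G^*$ sit one in each face disc of $G$. In particular $G^*$ has the same surface type as $G$, so $D_{G^*}$ is constructed by exactly the same recipe as $D_G$, using this shared embedding.

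First I would analyse the picture local to an edge. In a neighbourhood of $e$ we see a rectangle $R$ (the edge ribbon of $e$) whose two short sides meet vertex discs of $G$, together with a perpendicular rectangle $R^*$ (the edge ribbon of $e^*$) whose two short sides meet the dual vertex discs of $G^*$ sitting in the faces of $G$ adjacent to $e$. The local recipe of Figure~\ref{f.rgtol} places a crossing inside $R$ whose over/under strand depends on the sign $\varepsilon$ of $e$ and on the orientation of $R$: inspection of the figure shows that the configurations for $\varepsilon=+$ and $\varepsilon=-$ are related by a $90^{\circ}$ rotation. By Definition~\ref{c2.d2} the edge $e^*$ carries sign $-\varepsilon$, and $R^*$ is rotated $90^{\circ}$ relative to $R$. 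These two changes cancel, so the crossing produced at $e^*$ in $D_{G^*}$ coincides, as a decorated crossing inside the same small disc of $\Sigma$, with the crossing produced at $e$ in $D_G$.

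Second, I would verify that the arcs joining consecutive crossings also agree. Once all crossings have been drawn the free arc-ends lie at the same four points of $\Sigma$ in both constructions. In $D_G$ these ends are joined by arcs running along the vertex boundaries of $G$; in $D_{G^*}$ by arcs running along the vertex boundaries of $G^*$, which are precisely the face boundaries of $G$. In each corner region bounded by a piece of a $G$-vertex and a piece of a $G^*$-vertex, the two candidate connecting arcs have the same endpoints and are isotopic rel endpoints, so the resulting diagrams on $\Sigma$ coincide. Lifting to $D^2$ as prescribed by the construction (and, in the plane case, leaving the same face unfilled for both $G$ and $G^*$), this yields $D_G=D_{G^*}$.

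The main obstacle is the bookkeeping in the local step: one has to be careful with the orientation conventions on the edge ribbons (and the induced directions on the arrows of Figure~\ref{f.rgtol}) to confirm that the $90^{\circ}$ rotation and the sign flip really do combine to reproduce the same over/under strand at each crossing. Once that local identification is established, the global identity $D_G=D_{G^*}$ is immediate from the argument above.
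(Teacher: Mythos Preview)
Your proof is correct and takes essentially the same approach as the paper's: the paper's one-line argument is simply to compare Figure~\ref{f.rgtol} with the sign convention in Definition~\ref{c2.d2} (via Figure~\ref{c2.Eop}), which amounts exactly to your observation that the sign flip under duality cancels the $90^{\circ}$ rotation of the edge ribbon. You have spelled out in detail what the paper leaves implicit, including the check that the connecting arcs along vertex versus face boundaries agree.
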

\begin{proof}
Upon remembering that taking the dual of a signed ribbon graph changes the sign of each edge, the result is readily seen by comparing Figures~\ref{f.rgtol} and~\ref{c2.Eop}.
\end{proof}

\begin{lemma}\label{statespd}
Let $D$ be a diagram of a link in \RPthreet. Then all of the signed ribbon graphs  in $\mathbb{G}_D$ are partial duals of each other.
\end{lemma}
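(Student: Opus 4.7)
The plan is to reduce to the case where the two states differ at a single crossing, and then observe directly from the constructions that switching between the $A$- and $B$-splicings at one crossing corresponds precisely to taking the partial dual with respect to the edge labelled by that crossing.

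First, I would fix an arbitrary state $\sigma$ of $D$ and, for any other state $\sigma'$, let $A \subseteq E(G_{(D,\sigma)})$ be the set of edge-labels at which $\sigma$ and $\sigma'$ prescribe different splicings. I claim $G_{(D,\sigma')} = (G_{(D,\sigma)})^{A}$. Since $\sigma'$ is obtained from $\sigma$ by toggling the splicing at each crossing in $A$ one at a time, and since (by Proposition~\ref{c2.p2}(3)) partial duals with respect to singletons compose to give the partial dual with respect to their union when the singletons are disjoint, it suffices to prove the single-crossing case: if $\sigma$ and $\sigma'$ differ only at one crossing $c$, then $G_{(D,\sigma')} = (G_{(D,\sigma)})^{\{c\}}$.

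For the single-crossing case, I would work directly with the signed arrow presentations. Both $G_{(D,\sigma)}$ and $G_{(D,\sigma')}$ are constructed by placing the labelled, signed arrows from Figure~\ref{f.spl} at each crossing and reading off the closed curves formed by the four strands entering the crossing's neighbourhood. Outside a small disc around $c$, the two arrow presentations are identical. Inside that disc, $\sigma$ uses one of the two configurations in Figures~\ref{f.splb}--\ref{f.splc} and $\sigma'$ uses the other. A direct inspection (with a consistent orientation of $D^2$, so that the arrows are directed as prescribed) shows that replacing the pair of arrows labelled $c$ in the marked $A$-splicing by the pair arising in the marked $B$-splicing is exactly the local move on arrow presentations defining the partial dual at $e=c$, shown in Figure~\ref{c2.Eop}; moreover the sign toggles from $-$ to $+$, matching the sign convention in Definition~\ref{c2.d2}. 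Thus $G_{(D,\sigma')} = (G_{(D,\sigma)})^{\{c\}}$.

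Combining the single-crossing case with Proposition~\ref{c2.p2}(1) and (3) gives the general statement: $G_{(D,\sigma')} = (G_{(D,\sigma)})^{A}$, so any two members of $\mathbb{G}_D$ are partial duals of each other. The main obstacle is a bookkeeping one, namely carefully matching the arrow orientations and sign assignments in Figure~\ref{f.spl} with the arrow-reversal and sign-toggling prescribed in Figure~\ref{c2.Eop} and Definition~\ref{c2.d2}; once the orientations of the disc and of the arrows at the A- and B-splicings are chosen consistently, this is a direct figure-to-figure verification.
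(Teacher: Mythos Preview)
Your proof is correct and takes essentially the same approach as the paper: both argue that switching the splicing at a crossing is, by direct comparison of Figures~\ref{f.spl} and~\ref{c2.Eop}, exactly the partial dual at the corresponding edge (with the sign toggling as in Definition~\ref{c2.d2}). Your reduction to the single-crossing case via Proposition~\ref{c2.p2}(\ref{c2.p2.3}) merely spells out what the paper's one-line proof leaves implicit.
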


\begin{proof}\label{l.allpd}
Let $G, H \in \mathbb{G}_D$. Then $G = G_{(D, \sigma)}$ and $H = H_{(D, \sigma')}$. It can be seen from Figure~\ref{c2.Eop} that taking partial duals corresponds exactly to choosing another state of $D$ as in Figure~\ref{f.spl}.
\end{proof}


\begin{lemma}\label{l.cbpd}
Let $D$ be a checkerboard colourable diagram of a link in \RPthreet. Then $G$ represents $D$ if and only if $D = D_{G^A}$ where $G^A$ is a signed plane or \RP ribbon graph.
\end{lemma}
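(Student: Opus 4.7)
The plan is to prove both directions by exploiting the fact that, for a checkerboard colourable diagram $D$, the Tait graphs provide a canonical plane or \RP representative of $\mathbb{G}_D$, together with the fact (from the proof of Lemma~\ref{statespd}) that taking a partial dual corresponds exactly to changing the state of $D$, so $\mathbb{G}_D$ is closed under partial duality and contains all partial duals of any one of its members.

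For the forward direction, suppose $G$ represents $D$, so $G \in \mathbb{G}_D$. Fix a checkerboard colouring of $D$ and let $T$ be the associated Tait graph. Then $T \in \mathbb{G}_D$ by construction and, by Proposition~\ref{p.tg}, $T$ is plane or \RP. The key verification is that $D_T = D$: comparing Figure~\ref{f.rgtol} with Figure~\ref{f.spl} locally at each edge/crossing shows that applying $H \mapsto D_H$ to the Tait graph of $D$ simply reverses the marked splicings used to build $T$, recovering $D$ (one also checks that the sign conventions match). Since $T, G \in \mathbb{G}_D$, Lemma~\ref{statespd} yields $T = G^A$ for some $A \subseteq E(G)$, and hence $D = D_T = D_{G^A}$ with $G^A$ plane or \RP, as required.

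For the reverse direction, suppose $D = D_{G^A}$ with $G^A$ plane or \RP. By Proposition~\ref{p.1}, the diagram $D$ is checkerboard colourable, and indeed the construction of $D_{G^A}$ naturally supplies a checkerboard colouring in which the regions corresponding to vertices of $G^A$ are black; the Tait graph of $D$ associated with this colouring is precisely $G^A$ itself (again by comparing Figures~\ref{f.rgtol} and~\ref{f.spl}). Hence $G^A \in \mathbb{G}_D$. Using the closure of $\mathbb{G}_D$ under partial duals together with Proposition~\ref{c2.p2}\ref{c2.p2.3}, we compute $(G^A)^A = G^{A \triangle A} = G^{\emptyset} = G$ by Proposition~\ref{c2.p2}\ref{c2.p2.1}, so $G \in \mathbb{G}_D$, i.e.\ $G$ represents $D$.

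The main obstacle is the careful verification that the constructions $D \mapsto T$ (Tait graph of a checkerboard colourable diagram) and $H \mapsto D_H$ (for $H$ plane or \RP) are mutually inverse. This reduces to a local check on crossings/edges using Figures~\ref{f.rgtol}, \ref{f.spl} and~\ref{c2.Eop}, with particular care needed for the antipodal identification on $\partial D^2$ (which is respected because both constructions are local and $H$ is drawn inside the equatorial disc via its cellular embedding in \RP or the sphere) and for the $\pm$ sign bookkeeping. Once this bijection between plane or \RP ribbon graphs and checkerboard coloured \RPthreet-diagrams is established, both implications follow immediately from Lemma~\ref{statespd} and Proposition~\ref{c2.p2}.
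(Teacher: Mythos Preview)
Your proof is correct and follows essentially the same approach as the paper: use the Tait graph $T$ as the plane or \RP\ representative, observe $D=D_T$, and apply Lemma~\ref{statespd} to pass between $G$ and $T=G^A$ via partial duality. The paper is terser---it asserts ``clearly $D=D_T$'' and, for the other direction, simply notes $G^A=G_{(D,\sigma)}$ for some state $\sigma$---whereas you spell out the local verification that $D\mapsto T$ and $H\mapsto D_H$ are mutually inverse and identify $G^A$ explicitly as a Tait graph of $D_{G^A}$; these are the same ideas with more detail supplied.
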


\begin{proof}
We begin by assuming that $D = D_{G^A}$ where $G^A$ is a signed plane or \RP ribbon graph. Then $G^A = G_{(D, \sigma)}$ for some   state $\sigma$ of $D$. By Lemma~\ref{statespd} it follows that the partial dual $(G^A)^A=G$ also represents $D$.

Conversely, assume that $G$ represents $D$. Since $D$ is checkerboard colourable, it can  be represented by a Tait graph $T$. Clearly $D=D_T$. Then  from Lemma~\ref{statespd} it follows that $T=G^A$ for some $A\subseteq E(G)$. Since $T$ is a plane or \RP ribbon graph (by Proposition~\ref{p.tg}), $T$ is the ribbon graph required by the lemma.
%
\end{proof}

Lemma~\ref{l.cbpd} provides a way to construct all of the checkerboard colourable link diagrams represented by a given signed ribbon graph: find all of its plane or \RP partial partial duals and construct the links associated with them.  This process is illustrated in Figure~\ref{f3}. The checkerboard colorability requirement here cannot be dropped. For example, if $D$ is the diagram from Figure~\ref{f1a}, then $\mathbb{G}_D$  contains no plane or \RP ribbon graphs. This leads to the following problem.

\begin{open}
Let $G$ be a signed ribbon graph. Find an efficient way to construct all of the diagrams of links in \RPthree that have $G$ as a representative.
\end{open}
We continue with some  corollaries of Lemma~\ref{l.cbpd}.

\begin{corollary}\label{c.nh}
Let $D$ and $D'$ be checkerboard colourable  diagrams of links in \RPthree such that  $\mathbb{G}_D = \mathbb{G}_{D'}$, then $D$ is null-homologous if and only if  $D'$ is.
\end{corollary}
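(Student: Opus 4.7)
The plan is to combine two facts we already have on the table: first, Proposition~\ref{p.tg} (and its proof) actually pins down which topological type a Tait graph has, namely plane exactly when the diagram is null-homologous and \RP exactly when it is not; second, partial duality preserves orientability by Proposition~\ref{c2.p2}(\ref{c2.p2.4}). Together with Lemma~\ref{statespd} these should let us transfer the homological property across $\mathbb{G}_D = \mathbb{G}_{D'}$.

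First I would fix a checkerboard colouring of $D$ and let $T$ be the resulting Tait graph, and similarly fix $T'$ a Tait graph of $D'$. Reading the proof of Proposition~\ref{p.tg} carefully, one sees that the argument actually establishes the equivalences: $T$ is plane iff $D$ is null-homologous (because in that case the embedding is carried out on the sphere), and $T$ is \RP iff $D$ is not null-homologous. The same holds for $T'$ and $D'$. So the corollary will follow once we know that $T$ is plane iff $T'$ is plane.

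Next, since $T \in \mathbb{G}_D = \mathbb{G}_{D'}$ and $T' \in \mathbb{G}_{D'}$, Lemma~\ref{statespd} gives that $T$ and $T'$ are partial duals of one another, say $T' = T^B$. By Proposition~\ref{c2.p2}(\ref{c2.p2.4}), $T$ is orientable if and only if $T'$ is. But each of $T, T'$ is either plane or \RP, and a plane ribbon graph is orientable while an \RP ribbon graph is not. So ``orientable'' and ``plane'' coincide within this restricted class, and hence $T$ is plane iff $T'$ is plane. Combining with the previous paragraph gives that $D$ is null-homologous iff $D'$ is.

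There is no real obstacle; the one point worth checking carefully is that the proof of Proposition~\ref{p.tg} gives the ``only if'' directions of the plane/\RP characterisation of Tait graphs and not merely the ``if'' directions, but inspecting that proof confirms this. Everything else is a direct application of results already in the excerpt.
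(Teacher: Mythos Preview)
Your proof is correct and follows essentially the same approach as the paper's: the paper's proof is the terse two-liner ``$D$ is null-homologous if and only if it has a plane Tait graph. The result then follows since partial duality preserves orientability,'' and you have simply unpacked each of these steps, making explicit the appeal to Lemma~\ref{statespd} and the fact that within the class $\{\text{plane}, \rp\}$ orientability distinguishes the two types.
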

\begin{proof}
$D$ is null-homologous if and only if  it has a plane Tait graph. The result then follows since partial duality preserves orientability.
\end{proof}

\begin{corollary}\label{c.dd}
Let $D$ and $D'$ be checkerboard colourable  diagrams of links in \RPthree such that  $\mathbb{G}_D = \mathbb{G}_{D'}$. Then there exists a plane or, respectively,  \RP  ribbon graph $G$, and $A \subseteq E(G)$ such that $G^A$ is plane  or, respectively, \RP  and such that $D = D_G$ and $D' = D_{G^A}$.
\end{corollary}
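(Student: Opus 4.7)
The plan is to build $G$ as a Tait graph of $D$ and to recognise $G^A$ as a Tait graph of $D'$, using the fact that all ribbon graphs associated with a fixed diagram are partial duals of one another. This reduces the corollary to a direct application of the results already established, together with a bookkeeping check on plane versus $\mathbb{RP}^2$ types.

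First, I would checkerboard colour $D$ and let $G$ be the resulting Tait graph. By Proposition~\ref{p.tg}, $G$ is either plane or an $\mathbb{RP}^2$ ribbon graph, and by construction $D = D_G$ (as noted in the remark after the proof of Proposition~\ref{p.tg}, this construction of the Tait graph coincides with the usual one, so the link diagram recovered from it is the original $D$). Similarly, I would pick a Tait graph $H$ of $D'$, which again is plane or $\mathbb{RP}^2$ by Proposition~\ref{p.tg}, and satisfies $D' = D_H$.

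Next, since $G \in \mathbb{G}_D = \mathbb{G}_{D'}$ and $H \in \mathbb{G}_{D'}$, Lemma~\ref{statespd} tells me that $G$ and $H$ are partial duals of each other. Thus there exists $A \subseteq E(G)$ with $H = G^A$. This already gives $D = D_G$ and $D' = D_{G^A}$ with both $G$ and $G^A$ plane or $\mathbb{RP}^2$.

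The only remaining point is that the types should match ``respectively'', i.e.\ both plane together or both $\mathbb{RP}^2$ together. This is exactly the content of Corollary~\ref{c.nh}: $D$ is null-homologous iff $D'$ is, and a Tait graph is plane precisely when the underlying diagram is null-homologous. So $G$ and $G^A$ are simultaneously plane or simultaneously $\mathbb{RP}^2$. The main (minor) obstacle is simply verifying that a Tait graph $T$ of a checkerboard coloured $D$ satisfies $D_T = D$; this is a direct comparison of the construction in Figure~\ref{f.rgtol} with the black-region-following splicing used to define Tait graphs, and requires no new ideas.
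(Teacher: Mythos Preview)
Your proposal is correct and follows essentially the same route as the paper: take a Tait graph $G$ of $D$ and a Tait graph $H$ of $D'$, observe via Proposition~\ref{p.tg} that each is plane or $\mathbb{RP}^2$, and then use $\mathbb{G}_D=\mathbb{G}_{D'}$ together with Lemma~\ref{statespd} to write $H=G^A$. The paper's own proof omits the explicit check that $D=D_G$ and the ``respectively'' clause; your appeal to the Tait-graph construction and to Corollary~\ref{c.nh} for these points is the natural way to fill in those details.
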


\begin{proof}
We have that $D$ and $D'$ give rise to the same set of ribbon graphs. Since $D$ is checkerboard colourable, it gives rise to a plane or \RP ribbon graph $G$ (namely one of its Tait graphs, by Proposition~\ref{p.tg}). Moreover, since $D'$ is also checkerboard colourable, it also gives rise to a plane or \RP ribbon graph $H$. We also have that $H \in \mathbb{G}_D$, so $H = G^A$ for some $A \subseteq E(G)$ by Lemma \ref{statespd}.
\end{proof}

Corollary~\ref{c.dd} is of key importance here: it tells us that if two   checkerboard colourable  diagrams of links in \RPthreet, $D$ and $D'$, are represented by the same ribbon graphs, then they are both diagrams associated with partially dual plane or  \RP ribbon graphs  $G$ and $G'$. Thus if we understand how   $G$ and $G'$ are related to each other, we can deduce how $D$ and $D'$ are related to each other. This is our strategy for proving Theorem~\ref{mainthm}. 

In  \cite{MR2928906} and  \cite{MR2994404},  rough structure theorems for the partial duals of plane ribbon graphs and \RP ribbon graphs were given. These papers also contained  local moves that allows us to move between all partially dual plane or \RP ribbon graphs. To describe this move, we need a little additional terminology. 

Let $G$ be a ribbon graph, $v\in V(G)$, and $P$ and $Q$ be non-trivial ribbon subgraphs of $G$. Then $G$ is said to be the {\em join} of $P$ and $Q$, written $P\vee Q$, if $G=P\cup Q$ and $P\cap Q=\{v\}$ and if there exists an arc on $v$ with the property that all edges of $P$ meet it there, and none of the edges of $Q$ do. 
See the left-hand side of Figure~\ref{c5.s6.ss1.f4} which illustrates a ribbon graph of the form $P\vee Q$.
We do not require the ribbon graphs $G$, $P$ or $Q$ to be connected.   Note that since genus is additive under joins, if $G$ is plane then both  $P$ and $Q$ are plane, and if $G$ is \RP then exactly one of  $P$ or $Q$ is \RP and the other is plane.

 Let $G=P\vee Q$ be a ribbon graph. We  say that the ribbon graph   $G^{E(Q) } =  P \vee Q^{ E(Q)}=P \vee Q^*$ is obtained from $G$ by a  {\em dual-of-a-join-summand move}. We say that two ribbon graphs are related by {\em dualling  join-summands} if there is a sequence of  dual-of-a-join-summand moves taking one to the other, or if they are geometric duals.  See Figure~\ref{c5.s6.ss1.f4}.

\begin{figure}
\centering
\labellist
 \small\hair 2pt
 \pinlabel  {$\overbrace{\hspace{44pt}}^{P}$}  [l] at 0 105 
  \pinlabel  {$\underbrace{\hspace{50pt}}_{Q}$}  [l] at 50 0 
\endlabellist
\includegraphics[scale=.6]{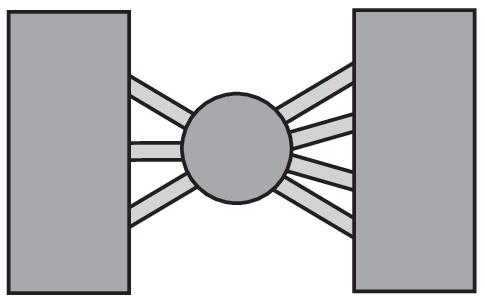}
\quad \raisebox{7mm}{\includegraphics[scale=.6]{ch5_0}} \quad
\labellist
 \small\hair 2pt
  \pinlabel  {$\overbrace{\hspace{44pt}}^{P}$}  [l] at 0 105 
  \pinlabel  {$\underbrace{\hspace{44pt}}_{ Q^*}$}  [l] at 45 0 
\endlabellist
\includegraphics[scale=.6]{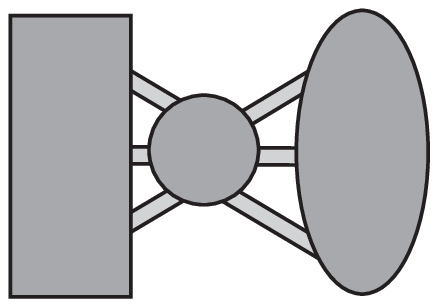}
\caption{The dual of  a join-summand move.}
\label{c5.s6.ss1.f4}
\end{figure}

The following result is an amalgamation of Theorem~7.3 of \cite{MR2928906} and Theorem~5.8 of \cite{MR2994404}.
\begin{theorem}\label{t.sim}
Let $G$ and $H$ be connected plane or \RP ribbon graphs. Then $G$ and $H$ are partial duals if and only if they are related by dualling  join-summands.
\end{theorem}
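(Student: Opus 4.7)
The plan is to prove the two directions separately, with the forward direction amounting to a careful case-split that appeals to the two cited structure theorems.

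For the easier direction (dualling join-summands implies partial duality), I would argue as follows. If $G = P \vee Q$ and $H = P \vee Q^*$ is obtained by a single dual-of-a-join-summand move, then since $Q^* = Q^{E(Q)}$ by Proposition~\ref{c2.p2}(2), one sees directly from the arrow-presentation description of partial duality that $P \vee Q^{E(Q)} = G^{E(Q)}$; so $H$ is a partial dual of $G$. Iterating this and using Proposition~\ref{c2.p2}(3) (compositions of partial duals are partial duals, the edge-subsets combining by symmetric difference) shows that a sequence of such moves still yields a partial dual, and a geometric dual is the partial dual with respect to $E(G)$ by Proposition~\ref{c2.p2}(2). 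So if $G$ and $H$ are related by dualling join-summands, then $H = G^A$ for some $A \subseteq E(G)$.

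For the harder direction, suppose $G$ and $H$ are connected, each is either plane or \RP, and $H = G^A$ for some $A \subseteq E(G)$. The first observation is that $G$ and $H$ must be of the same type: plane ribbon graphs are orientable and \RP ribbon graphs are non-orientable, and by Proposition~\ref{c2.p2}(4), orientability is preserved by partial duality. So either both ribbon graphs are plane or both are \RP. In the plane case, the conclusion is precisely Theorem~7.3 of \cite{MR2928906}, which characterises partial duals among connected plane ribbon graphs as those related by dualling join-summands. In the \RP case, the analogous characterisation is Theorem~5.8 of \cite{MR2994404}.

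The main obstacle, of course, is the forward direction, and this work is done in the two cited papers; the role of the present argument is simply to combine them via the orientability dichotomy. The only potentially subtle point to check is that the notion of ``dualling join-summands'' as phrased here (allowing either a finite sequence of join-summand dualities or a single geometric dual at the end) matches the notion used in the two source theorems; this is a routine verification using Proposition~\ref{c2.p2}(2)--(3), since a geometric dual of $P \vee Q$ is itself obtainable as the dual of the join-summand $P \vee Q$ viewed as a join with a trivial second summand.
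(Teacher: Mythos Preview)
Your approach matches the paper's exactly: the paper does not give a proof of this theorem at all, but simply states it as ``an amalgamation of Theorem~7.3 of \cite{MR2928906} and Theorem~5.8 of \cite{MR2994404}''. Your write-up supplies the glue the paper leaves implicit---the easy direction via Proposition~\ref{c2.p2}, and the orientability split via Proposition~\ref{c2.p2}(\ref{c2.p2.4}) that separates the plane and \RP cases so that each cited theorem applies.

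One small correction: your final remark, that the geometric dual of $P\vee Q$ can be obtained ``as the dual of the join-summand $P\vee Q$ viewed as a join with a trivial second summand'', does not work as stated, since the paper's definition of a join explicitly requires both $P$ and $Q$ to be \emph{non-trivial} ribbon subgraphs. This is presumably why the paper's definition of ``related by dualling join-summands'' adds the clause ``or if they are geometric duals'' as a separate case rather than absorbing it into the join-summand moves. For the easy direction this causes no trouble (the geometric dual is a partial dual by Proposition~\ref{c2.p2}(\ref{c2.p2.2})); for the hard direction you must simply check that the cited theorems use a compatible convention, which they do.
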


Theorem~\ref{t.sim} allows us to prove the following key result.
\begin{lemma}\label{lmain}
If  two \RP ribbon graphs $G$ and $G'$  are related by dualling join-summands, then the link diagrams $D_G$ and $D_{G'}$ they represent  are related by summand-flips.
\end{lemma}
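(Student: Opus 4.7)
The plan is to reduce the lemma to two atomic cases using the definition of ``dualling join-summands''. If $G' = G^*$, Proposition~\ref{p.2} gives $D_G = D_{G^*} = D_{G'}$, so no summand-flip is needed. By induction on the length of the sequence of moves relating $G$ to $G'$, it then suffices to treat a single dual-of-a-join-summand move $G = P\vee Q \mapsto G' = P\vee Q^*$. Since $G$ is $\rp$ and genus is additive under joins, exactly one of $P,Q$ is $\rp$ and the other is plane; in particular the summand $Q$ is itself plane or $\rp$, so Proposition~\ref{p.2} applies and gives $D_Q = D_{Q^*}$.

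I would then realise the join geometrically inside the construction of $D_G$. Cellularly embed $G$ in its canonical closed surface and lift to the disc $D^2$ of Section~\ref{s3a}. The join condition at the shared vertex $v$ provides a separating arc $\alpha$ on $\partial v$ with every $P$-edge attached on one side and every $Q$-edge on the other; in $D_G$, a single link arc crosses $\alpha$ near each endpoint and no crossing lies on $\alpha$. This lets one choose a disc $\mathfrak{D} \subset D^2$ whose boundary meets $D_G$ transversally in precisely those two points and whose interior contains exactly the part of $D_G$ coming from the plane summand, giving the combinatorial data required by Definition~\ref{d.lflip}. If the plane summand is $Q$ this works directly; if instead $P$ is plane, I would first use Proposition~\ref{p.2} and the identity $(P\vee Q^*)^* = P^*\vee Q$ to replace the move with the dualisation of the plane summand $P$, so that $\mathfrak{D}$ can again be chosen as a disc inside $D^2$.

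The core step is to show that the summand-flip on $\mathfrak{D}$ converts $D_G$ into $D_{G'}$. Outside $\mathfrak{D}$ the two diagrams agree, because the unaffected part of the arrow presentation produces the same arcs and crossings under the edge-recipe of Figure~\ref{f.rgtol}. Inside $\mathfrak{D}$, the $D_G$-picture is the link diagram of the plane summand and the $D_{G'}$-picture is that of its dual; by Proposition~\ref{p.2} these are the same unoriented link diagram drawn in a disc, and a careful comparison of the local dual rule of Figure~\ref{c2.Eop} with the edge-recipe shows that passing between the two pictures inside $\mathfrak{D}$ is realised precisely by reversing the orientation of $\mathfrak{D}$ relative to its complement and regluing at the two crossing points of $\partial\mathfrak{D}$ with $D_G$ --- exactly the cut-flip-reglue operation of Definition~\ref{d.lflip}. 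I expect the main obstacle to be this orientation bookkeeping: verifying that the equality $D_Q = D_{Q^*}$ from Proposition~\ref{p.2}, proved for a stand-alone ribbon graph, is realised when $Q$ sits inside $\mathfrak{D}$ by the opposite-orientation gluing along $\partial\mathfrak{D}$ that characterises a summand-flip, rather than by the identity gluing.
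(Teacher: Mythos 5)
Your overall architecture is sound and is essentially a mirror image of the paper's: you reduce to a single dual-of-a-join-summand move and then, via $(P\vee Q^*)^{*}=P^{*}\vee Q$ and Proposition~\ref{p.2}, arrange that the summand being dualised is the plane one, whereas the paper arranges that it is the \RP one and shows that the plane summand is the part of the diagram that gets flipped; the two reductions are equivalent and both are legitimate. The genuine gap is that the one non-routine claim in the lemma is exactly the step you first assert (``a careful comparison \dots shows'') and then, in your closing sentence, concede is still to be verified: namely that when $Q^{*}$ is re-embedded in the disc $\mathfrak{D}$ so as to realise the join $P\vee Q^{*}$ with $P$ held fixed, it sits there with the \emph{opposite} orientation to $Q$, so that the passage from $D_G$ to $D_{G'}$ is the cut--flip--reglue of Definition~\ref{d.lflip} rather than the identity regluing. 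Everything else in your write-up (agreement outside $\mathfrak{D}$, the choice of $\mathfrak{D}$ from the join arc, $D_Q=D_{Q^{*}}$ as stand-alone diagrams) is bookkeeping; without the orientation claim you have only shown that $D_G$ and $D_{G'}$ agree outside $\mathfrak{D}$ and agree inside $\mathfrak{D}$ up to an unspecified homeomorphism, which does not yet yield a summand-flip.

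Note also that this claim cannot be extracted purely from the local rule of Figure~\ref{c2.Eop}: that figure tells you how each pair of marking arrows is rerouted, but whether the entire summand returns mirrored is a global statement about how the dual embedding attaches along the join arc. The paper supplies the missing argument with the detach--dualise--reattach analysis of Figure~\ref{L3r}: detach the summand that is not being dualised, recording its attachment by labelled arrows; form the dual embedding of the other summand by exchanging its vertex and face discs, which leaves the edge discs and all vertex/face boundaries --- and hence the drawn link diagram, by Proposition~\ref{p.2} --- literally in place; then observe that the recorded arrows are now read from the opposite side of the join arc, so the detached summand must be reglued flipped over. Some argument at this level of explicitness is what your proof still needs in order to close.
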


\begin{proof} 
It suffices to show that if $G$ and $G'$  are related by a single dual-of-a-join-summand move then $D_G$ and $D_{G'}$ are related by a summand-flip. Suppose that $G = A\vee B$, that $A\cap B = \{v\}$, and that $G'=A^*\vee B$ or  $G'=A\vee B^*$.   Since we know that genus is additive under joins, we have that one of $A$ or $B$ is \RP and the other is plane. Without loss of generality, suppose that $A$ is the \RP summand. 
 
First suppose that $G'=A^*\vee B$. We start by determining how the cellular embeddings of  $G$ and $G'$ are related. From this we will deduce how the corresponding link diagrams are related. Start by taking the cellular embedding of $G$ in \RPt. This is illustrated in  Figure~\ref{lemma1r}. For each edge of $B$ that meets $v$, place a labelled arrow on the intersection of the edge with $v$. We can then `detach' $B$ from $G$, as indicated in Figure~\ref{wtf}, so that $G$ is recovered from $A$ and $B$ by identifying the corresponding arrows in $A$ and in $B$ with its copy of $v$ removed.  After detaching $B$ we obtain a cellular embedding of $A$ in \RPt. 
From this form the cellular embedding of $A^*$ by interchanging the vertices and faces. (In detail, $A^*  \subset \rp$ is obtained from $A  \subset \rp$ by reassigning the face (respectively, vertex) discs of $A  \subset \rp$ as vertex (respectively, face) discs of $A^*  \subset \rp$. Edge discs are unchanged.) This is indicated in Figure~\ref{lemma3r}. Finally, obtain an embedding of $G'=A^*\vee B $ by reattaching $B$ according to the labelled arrows, as is indicated in arrows as in Figure~\ref{lemma4r}, and notice that $B$ has been `flipped over'.  Finally consider the diagrams $D_G$ and $D_{G'}$ drawn using these embeddings. Since $A$ and $A^*$ have the same edges and vertex/face boundaries, and by Proposition~\ref{p.2}, $D_A=D_{A^*}$, and we see that $D_G$ and $D_{G'}$ are related by a summand-flip, as in Figures~\ref{lemma5r} and~\ref{lemma6r}.

Next suppose that $G'=A\vee B^*$. Then, using Proposition~\ref{c2.p2} and that duality preserves joins, we have $G'= (A \vee B^*) = (A \vee B^*)^{**} = (A^* \vee B^{**})^*= (A^* \vee B)^* $. Then since $D_{(A^* \vee B)^* }=D_{(A^* \vee B) }$, by Proposition~\ref{p.2}, this case reduces to the first, completing the proof.
\end{proof}

\begin{figure}
\centering
\subfigure[$G  \subset \rp$]{
\labellist
 \small\hair 2pt
\pinlabel {$A$}  at 110 50
\pinlabel {$B$}  at 102 140
\pinlabel {$v$}  at 150 101
\endlabellist
\includegraphics[scale=.5]{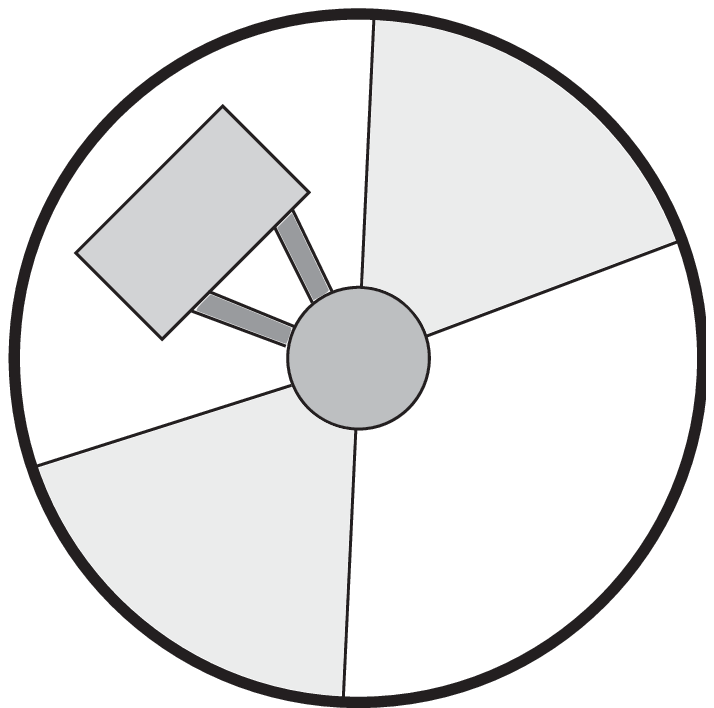}
\label{lemma1r}
}
\hspace{17mm}
\subfigure[$A  \subset \rp$]{
\labellist
 \small\hair 2pt
\pinlabel {$A$}  at 120 50
\pinlabel {$B$}  at 33 191
\pinlabel {$v$}  at 160 101
\endlabellist
\includegraphics[scale=.5]{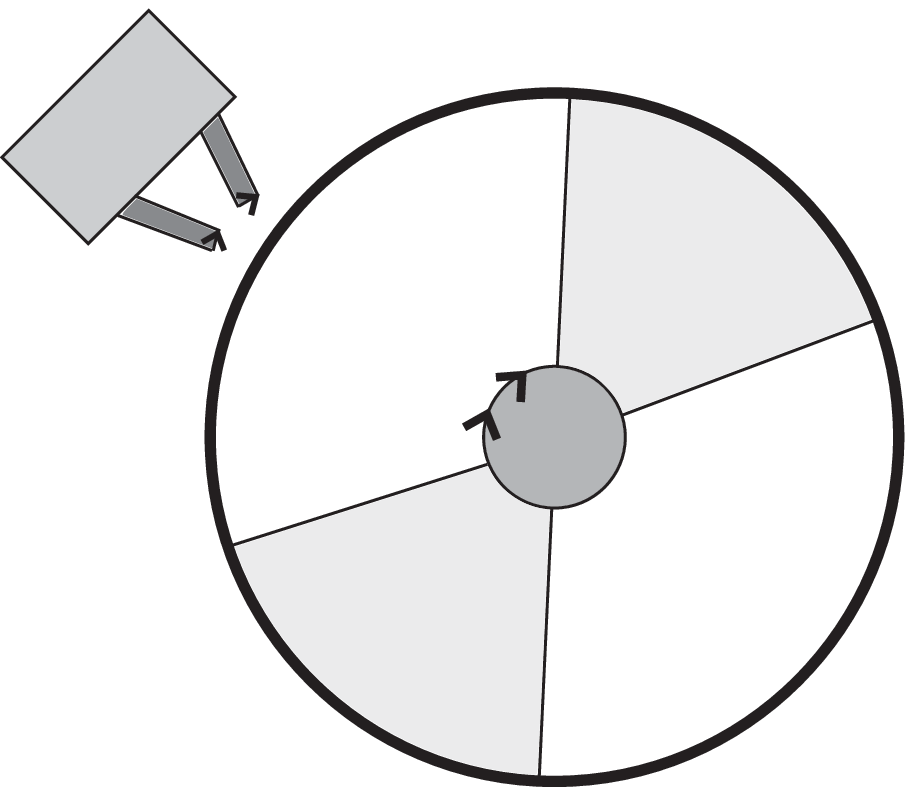}
\label{wtf}
}

\subfigure[$A^*  \subset \rp$]{
\labellist
\small\hair 2pt
\pinlabel {$A^*$}  at 100 140
\pinlabel {$B$}  at 33 188
\endlabellist
\includegraphics[scale=.5]{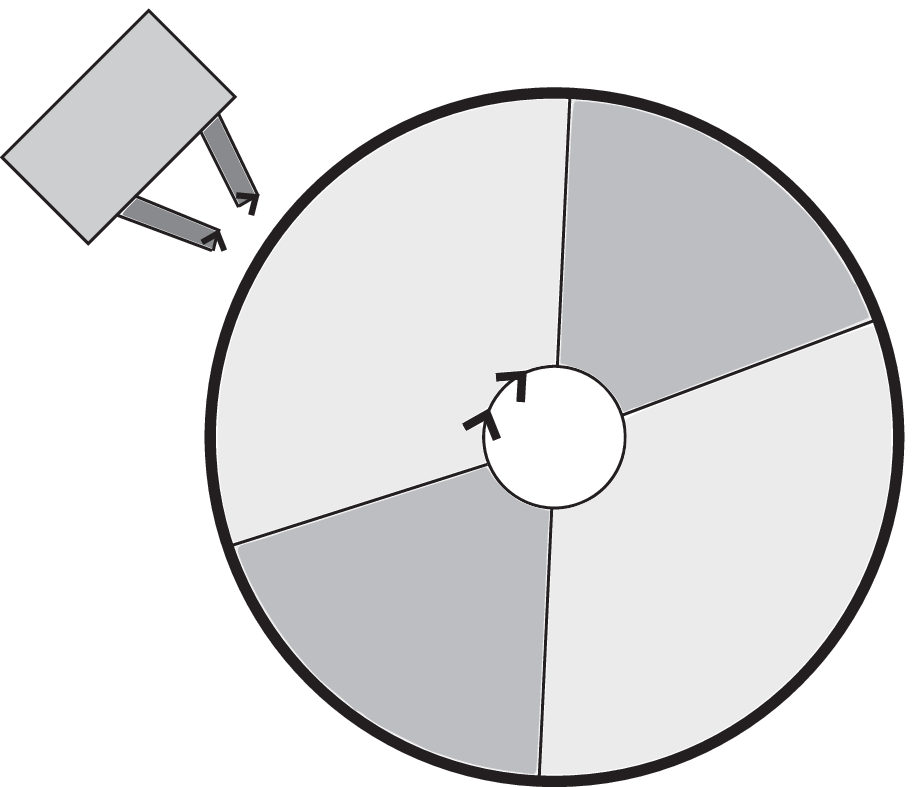}
\label{lemma3r}
}
\hspace{17mm}
\subfigure[$G'$]{
\labellist
\small\hair 2pt
\pinlabel {$A^*$} at 90 143
\pinlabel \scalebox{1}[-1]{$B$} at 153 105
\endlabellist
\includegraphics[width=8cm, height=8cm,scale=0.5]{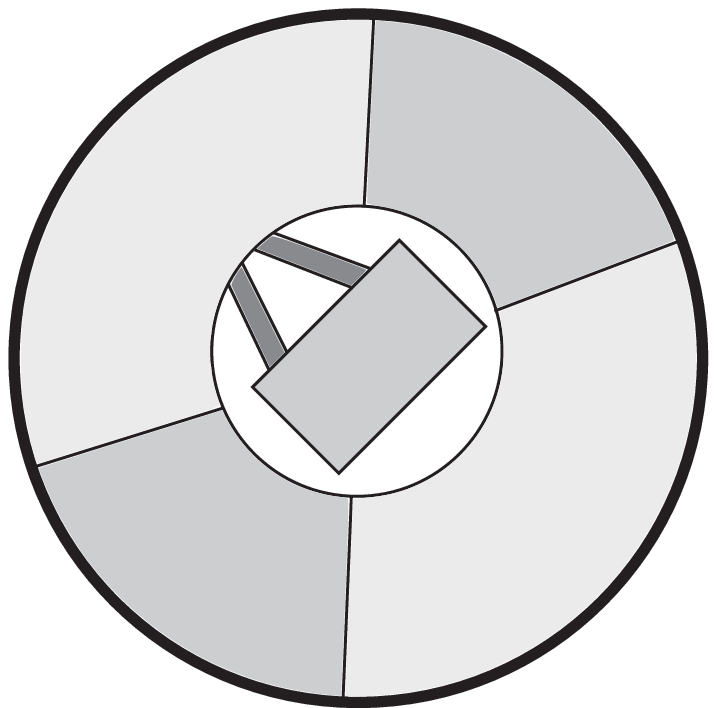}
\label{lemma4r}
}

\subfigure[$D_G$]{
\labellist
\small\hair 2pt
\pinlabel {$\alpha$} at 107 50
\pinlabel {$\beta$} at 103 143
\endlabellist
\includegraphics[scale=0.5]{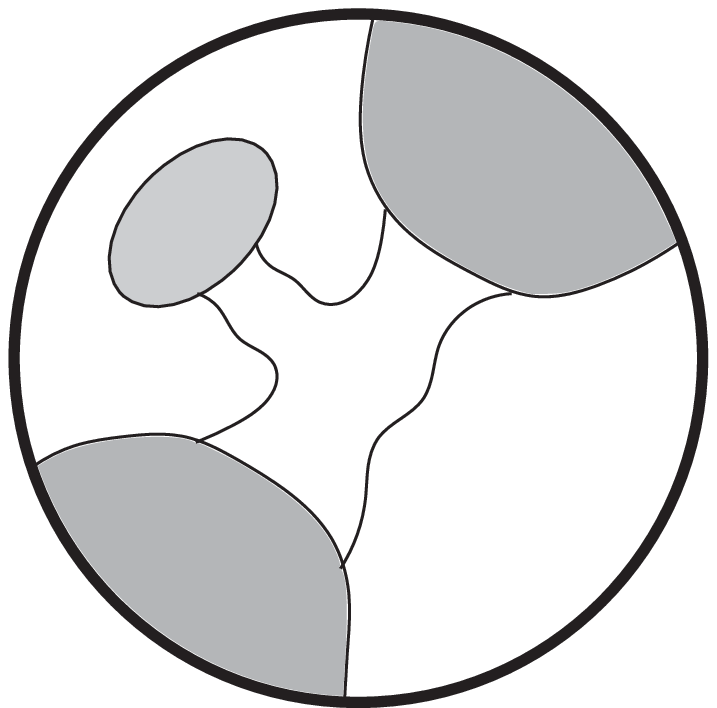}
\label{lemma5r}
}
\hspace{17mm}
\subfigure[$D_{G'}$]{
\labellist
\small \hair 2pt
\pinlabel {$\alpha$} at 107 50
\pinlabel \scalebox{1}[-1]{$\beta$} at 143 107
\endlabellist
\includegraphics[scale=0.5]{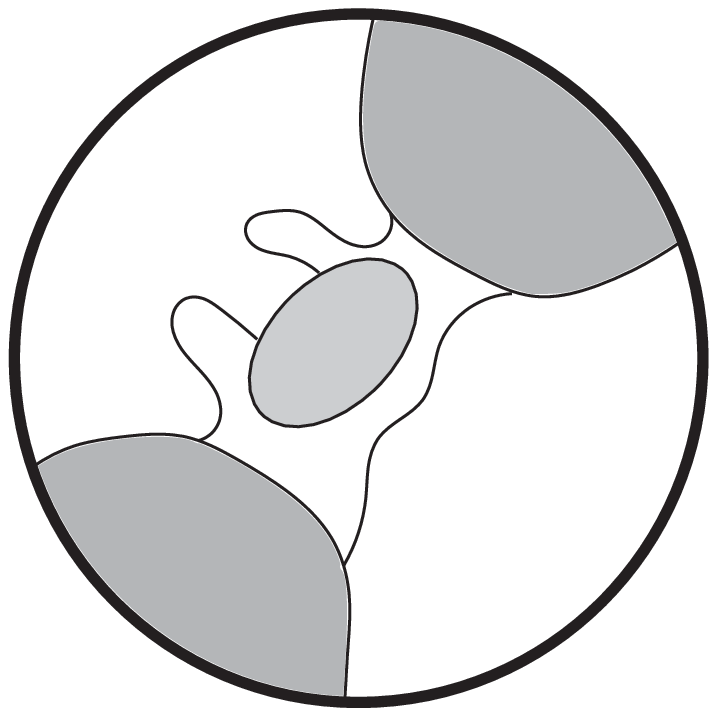}
\label{lemma6r}
}
\caption{A figure used in the proof of Lemma~\ref{lmain}.}
\label{L3r}
\end{figure}

\begin{proof}[Proof of Theorem~\ref{mainthm}]
It is readily seen that if $D$ and $D'$ are related by summand-flips then $\G_D=\G_{D'}$.
 
For the converse, assume that $D$ and $D'$ are checkerboard colourable link diagrams on \RP such that $\mathbb{G}_D = \mathbb{G}_{D'}$. If $D$ and $D'$ are not null-homologous then,  by Corollary~\ref{c.dd}, for some $G$ we have  $D = D_G$ and $D' = D_{G^A}$ where $G$ and $G^A$ are both \RP. We  know by  Theorem~\ref{t.sim} that $G$ and $G^A$ are related by dualling join-summands. Thus either  $G^A = G^*$, in which case the result follows from Proposition~\ref{p.2}, or  $G^A$ is obtained from $G$ by a sequence of dual-of-a-join-summand moves, in which case the result follows from Lemma~\ref{lmain}.
\end{proof}

\section{One vertex ribbon graphs}\label{1v}
We let $\A_D$ denote the \emph{all-A ribbon graph} of $D$ which is the ribbon graph obtained from $D$ by choosing the marked A-splicing at each crossing. The all-A ribbon graph is of particular interest since all of the signs are the same, and so a link diagram can be represented by an unsigned ribbon graph (see also Remark~\ref{rma1}). It was shown in \cite{MR3148509} that every classical link (i.e., in $S^3$)   can be represented as a ribbon graph with exactly one vertex. Furthermore, the authors of that paper gave a set of moves, analogous to the Reidemeister moves, that provide a way to move between all of the diagrams of a classical link that have one-vertex all-A ribbon graphs. In this section we extend their result to links in \RPthreet.

\begin{lemma}
Every link in \RPthree has a diagram $D$ for which $\A_D$ has exactly one vertex.
\end{lemma}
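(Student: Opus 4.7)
The plan is to prove the lemma by induction on the number $v(D)$ of state circles in the A-state of $D$ (equivalently, the number of vertices of $\mathbb{A}_D$). Start with any diagram $D_0$ of the given link; if $v(D_0)=1$ we are done, so assume $v(D_0)\ge 2$. I will produce a diagram $D_1$ of the same link, obtained from $D_0$ by a single Reidemeister~II move, with $v(D_1)=v(D_0)-1$, and then iterate.

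The R-II will be chosen to merge two ``adjacent'' state circles of $D_0$. The state circles form a collection of pairwise disjoint simple closed curves inside the net $\mathcal{N}_{D_0}\subset\rp$. Since $\rp$ is connected, a standard argument (take any arc in $\rp$ joining two state circles and enumerate the circles it crosses in order) shows that the face-adjacency graph on the state circles --- vertices are state circles, with an edge for each shared complementary face --- is connected. So when $v(D_0)\ge 2$ we may pick distinct state circles $C_1, C_2$ bounding a common face $F$. Choose sub-arcs $\alpha\subset C_1$ and $\beta\subset C_2$ lying on $\partial F$ close to each other and away from the (finitely many) crossings of $D_0$ contained in $F$; away from crossing regions these coincide with genuine arcs of $D_0$, so one can perform an R-II move between $\alpha$ and $\beta$ across $F$.

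The decisive step is a local computation at the new R-II ``lens''. Using the A-splicing convention of Figure~\ref{f.spl}, I would check that among the two possible over/under-strand configurations of the R-II pair, exactly one has the property that its two A-smoothings reconnect $\alpha$ and $\beta$ across the lens --- thereby gluing $C_1$ and $C_2$ into a single state circle --- while the other instead inserts a new small state circle inside the lens and leaves $C_1, C_2$ separate. Selecting the merging configuration yields the desired $D_1$ with $v(D_1)=v(D_0)-1$, and iterating finishes the proof. The main obstacle is this local A-smoothing analysis: verifying rigorously that one of the R-II configurations effects a merge rather than an add or a split. This is combinatorially routine once the A-splicing convention is fixed, but it is where the technical content of the argument is concentrated.
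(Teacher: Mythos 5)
Your proposal is correct and takes essentially the same approach as the paper: repeatedly merge two all-A state circles that cobound a face of the complement of the state in the net by performing a suitably chosen RII move along a connecting arc, decreasing the vertex count by one each time. The only detail the paper notes that you omit is that when the connecting arc in $\mathcal{N}_D$ crosses the line at infinity, the RII move must be accompanied by RIV (boundary) moves to realise it in the disc diagram; conversely, you spell out the face-adjacency and local A-smoothing checks that the paper leaves implicit.
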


\begin{proof}
Let $D$ be a diagram of  a  link  in \RPthreet.  Let $\sigma_A$ denote the \emph{all-A state} of $D$ obtained by  choosing the marked A-splicing at each crossing. If $\sigma_A$ has exactly one component then $\A_D$ has exactly one vertex. Otherwise, consider the all-A state $\bar{\sigma}_A$ of the net $\mathcal{N}_D$ of $D$.  There must be two closed curves of $\bar{\sigma}_A$ that can be joined by an embedded arc $\bar{\alpha}$  in $\mathbb{R}\mathrm{P}^2 \setminus \bar{\sigma}_A$. Performing an RII-move (possibly with some RIV-moves)  along the image of this arc in $D$ gives a new diagram $D'$. Then $\A_{D'}$  has one vertex less that $\A_D$. Repeat this process until only one curve remains.
\end{proof}

The \emph{M-moves} for diagrams of links in $\mathbb{R}\mathrm{P}^3$ consist of isotopy of the disc that preserves the antipodal pairing, together with the  moves shown in Figure~\ref{M-moves} that change the diagram locally as shown (the diagrams are identical outside of the shown region). For the $M_0$-move, we require the diagram to be connected in a specific way, as indicated by the labels.

\begin{figure}[ht]
	\centering
	\labellist
	\small\hair 2pt	
	\pinlabel {$M_0$} at 235 520
	\pinlabel{$a$} at 0 520
	\pinlabel{$b$} at 50 540
	\pinlabel{$b$} at 97 540
	\pinlabel{$a$} at 145 520
	\pinlabel{$c$} at 42 455
	\pinlabel{$c$} at 102 455
	
	\pinlabel{$a$} at 350 520
	\pinlabel{$b$} at 404 540
	\pinlabel{$b$} at 445 548
	\pinlabel{$a$} at 529 520
	\pinlabel{$c$} at 387 455
	\pinlabel{$c$} at 515 455
	
	\pinlabel{$M_1$} at 240 365
	
	\pinlabel{$M_2$} at 240 223
	
	\pinlabel{$M_3$} at 240 106
	
	\pinlabel{$M_4$} at 864 610
	
	\pinlabel{$M_5$} at 875 380
	
	\pinlabel{$M_6$} at 880 120
	
	\endlabellist
	\includegraphics[scale=0.4]{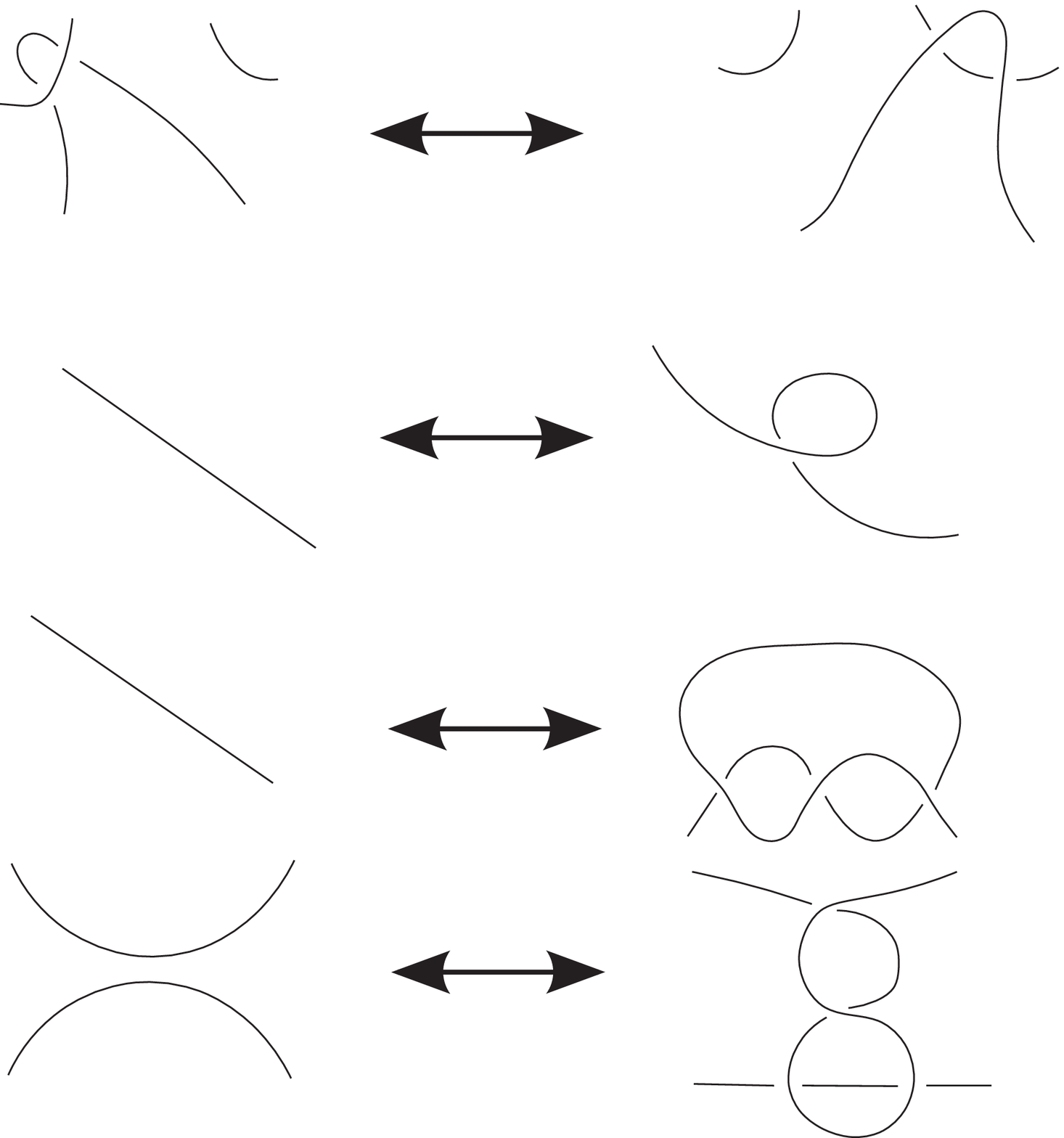}
	\hspace{10mm}
	\includegraphics[scale=0.37]{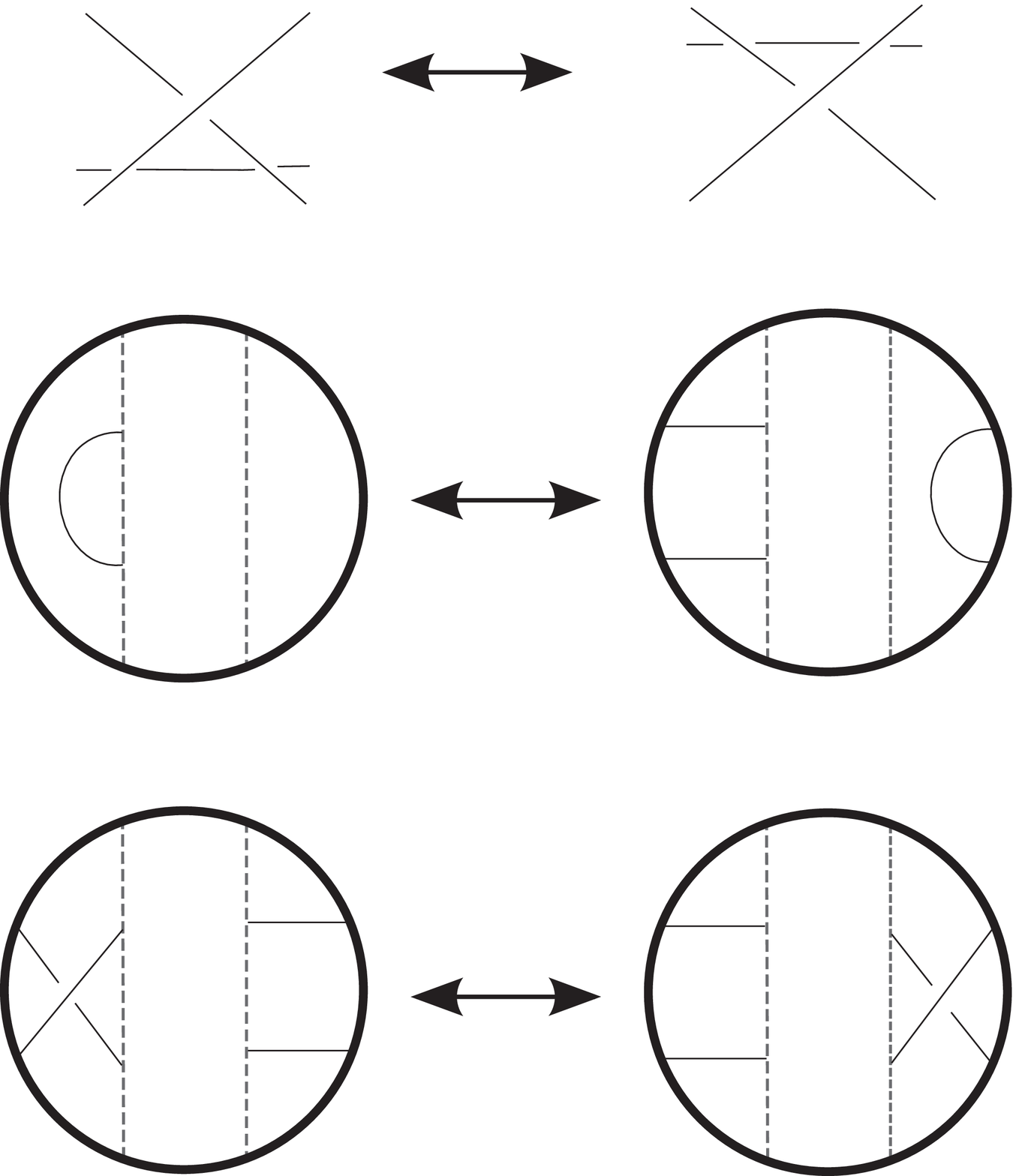}
	\caption{The M-moves.}
	\label{M-moves}
\end{figure}

\begin{lemma}
Let $D$ be a diagram of  a given  link  in \RPthreet. Then the M-moves do not change the number of vertices in $\A_D$.
\end{lemma}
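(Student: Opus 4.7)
The plan is to reduce the statement to a purely local, combinatorial verification. The key observation is that the vertices of $\A_D$ are in bijection with the closed curves of the \emph{all-A state} $\sigma_A$ of $D$, obtained by applying the marked A-splicing at every crossing: each such closed curve is precisely the boundary of a vertex disc in the arrow presentation that yields $\A_D$. Consequently, to prove the lemma it suffices to show that every M-move preserves the number of closed curves in $\sigma_A$.

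Each M-move modifies $D$ only inside some disc $\delta$ (the region pictured in Figure~\ref{M-moves}), leaving the remainder of $D$ unchanged. I would first apply the marked A-splicings (as in Figure~\ref{f.spl}) inside $\delta$ on both sides of the move, obtaining a local configuration consisting of a finite collection of disjoint arcs, each with endpoints on $\partial\delta$ (or on $\partial\delta\cup\partial D^2$ for the boundary moves $M_4$, $M_5$, $M_6$), together with possibly some closed curves lying entirely inside $\delta$. Because the splicings in the complement of $\delta$ are identical on the two sides of the move, the total number of closed curves in $\sigma_A$ is completely determined by (i) the pairing pattern the local arcs induce on the endpoints on $\partial\delta$, and (ii) the number of closed curves lying wholly inside $\delta$. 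Therefore, for each of $M_1, M_2, M_3, M_4, M_5, M_6$ I would draw both sides with their A-splicings performed, read off the endpoint pairing on $\partial\delta$, and check that the pairings and number of internal closed curves agree. For the boundary moves, one additionally follows arcs through the antipodal identification on $\partial D^2$, but the argument remains local in the net $\mathcal{N}_D$.

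The main obstacle is $M_0$, where the hypothesis that the diagram is connected outside $\delta$ in the specific pattern indicated by the labels $a$, $b$, $c$ is essential. Without this connectivity condition the two sides of $M_0$ can produce different numbers of closed curves in $\sigma_A$, because whether two endpoints on $\partial\delta$ lie on the same global closed curve or on distinct ones depends on how they are paired by the rest of the diagram. My approach here would be to draw the A-splicings on both sides, label the endpoints on $\partial\delta$ according to the labels $a$, $b$, $c$, and then trace the (unchanged) external arcs $a$, $b$, $c$ through the rest of the diagram to verify directly that the total number of closed curves formed is the same on both sides. Completing this case-by-case verification, together with the straightforward cases of $M_1$ through $M_6$, establishes the lemma.
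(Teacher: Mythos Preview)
Your approach is correct and rests on the same underlying observation the paper uses: the vertices of $\A_D$ are the closed curves of the all-$A$ state, so it suffices to check locally that each M-move preserves the number of such curves. The paper, however, does not carry out the verification for $M_0$--$M_4$ at all; it simply cites \cite{MR3148509}, where these moves (which involve only the interior of the disc and hence coincide with the classical case) were already handled. Only the genuinely new moves $M_5$ and $M_6$ are treated, and there the paper dispatches them in one line each: $M_5$ neither creates nor changes any crossing, so the all-$A$ state is literally unchanged up to isotopy, and $M_6$ is declared ``easy to see.'' Your proposal to verify every move directly via the endpoint-pairing argument on $\partial\delta$ is more self-contained and spells out exactly why the $M_0$ connectivity hypothesis is needed, but it amounts to redoing the case analysis that the cited paper already contains. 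One small slip: $M_4$ is one of the classical moves covered by the citation, not a boundary move; only $M_5$ and $M_6$ involve $\partial D^2$.
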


\begin{proof}
For moves $M_{0}$--$M_4$ we refer the reader to \cite{MR3148509}. It is easy to see that the $M_5$ move does not affect the number of components of the all-A state $\sigma_A$ of $D$, since it does not affect the number of, or type of, crossings.
It is also easy to see that $M_6$ does not change the number of vertices of the all-A ribbon graph.
\end{proof}

Let $\mathcal{D}$ denote the set of all diagrams of links in \RPthreet,   $\tilde{\mathcal{D}}$ denote $\mathcal{D}$  modulo the Reidemeister moves,  $\mathcal{D}_1 \subset \mathcal{D}$ denote the subset of diagrams such that their all-A ribbon graphs have exactly one vertex, and $\tilde{\mathcal{D}}_1$ denote  $\mathcal{D}_1 $ modulo the M-moves. Now consider the two natural projections $\phi: \mathcal{D} \to \tilde{\mathcal{D}}$ and $\phi_1 : \mathcal{D}_1 \to \tilde{\mathcal{D}_1}$.

\begin{theorem}
Given $D, D' \in \mathcal{D}_1$, then $\phi(D) = \phi(D')$ if and only if $\phi_1(D) = \phi_1(D')$.
\end{theorem}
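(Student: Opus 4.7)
The plan is to prove the two implications separately. The forward direction $\phi_1(D)=\phi_1(D') \Rightarrow \phi(D)=\phi(D')$ follows by direct inspection of Figure~\ref{M-moves}: each of $M_0$--$M_4$ coincides with (a restricted form of) a classical Reidemeister move, matching the M-moves of \cite{MR3148509}, while $M_5$ and $M_6$ are plainly restricted versions of the boundary moves RIV and RV respectively. Since the ambient isotopy in the definition of M-equivalence is the R0-move, every sequence of M-moves is a sequence of Reidemeister moves, yielding the forward implication.

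For the converse, my strategy is to adapt the argument of Abernathy et al.\ \cite{MR3148509} to the projective setting. Given $D,D' \in \mathcal{D}_1$ with $\phi(D)=\phi(D')$, fix a sequence of Reidemeister moves $D=D_0, D_1, \ldots, D_n=D'$. The central difficulty is that the intermediate $D_i$ need not lie in $\mathcal{D}_1$, and the moves need not be M-moves. I would modify this sequence in two stages. First, I would use \cite{MR3148509} as a black box to handle every maximal sub-sequence of interior moves (RI, RII, RIII) that begins and ends in $\mathcal{D}_1$, replacing it with a sequence of $M_0$--$M_4$ moves that stays inside $\mathcal{D}_1$; this step goes through verbatim because the interior Reidemeister moves are supported away from $\partial D^2$ and so do not see the boundary identifications. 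Second, I would treat each boundary Reidemeister move in the sequence, showing that it can be re-expressed as a sequence of $M_5$'s and $M_6$'s together with auxiliary $M_0$--$M_4$ moves.

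The main obstacle is the second stage, because an arbitrary RIV or RV move can change the number of components of the all-A state $\sigma_A$, and hence the vertex count of $\A_D$, potentially taking the diagram out of $\mathcal{D}_1$. The key technique I expect to need is a stabilise-and-cancel argument in the spirit of \cite{MR3148509}: insert a pair of compensating crossings via $M_2$ (realising an RII inside $\mathcal{D}_1$), use $M_0$--$M_4$ to bring the local boundary picture into the precise configuration required by the hypotheses of $M_5$ or $M_6$, execute the boundary M-move, and then destabilise with a second $M_2$. Verifying that every boundary Reidemeister move between two one-vertex diagrams can be routed through $\mathcal{D}_1$ in this way will require a careful case analysis at $\partial D^2$, checking that the one-vertex constraint can always be restored after the local change. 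Once this is established, concatenating the two stages gives an M-sequence from $D$ to $D'$, completing the proof.
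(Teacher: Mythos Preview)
Your forward direction matches the paper's. For the converse, however, you have over-complicated matters because of a misreading of $M_5$ and $M_6$. In the paper, only $M_0$ carries an extra connectivity hypothesis; $M_5$ and $M_6$ are not ``restricted versions'' of the boundary Reidemeister moves but are \emph{literally} RIV and RV. By the lemma immediately preceding the theorem, these moves always preserve the number of vertices of $\A_D$. Hence your announced ``main obstacle''---that an arbitrary RIV or RV could change the component count of the all-$A$ state---simply does not occur, and the entire stabilise-and-cancel second stage is unnecessary.

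The paper's proof of the converse is accordingly one line: for RI--RIII cite \cite{MR3148509}, and observe that RIV and RV are already $M_5$ and $M_6$. Your first stage is in the right spirit, but note a circularity in your plan as written: you propose to apply \cite{MR3148509} to each maximal classical sub-sequence ``that begins and ends in $\mathcal{D}_1$'', yet nothing in your argument guarantees that the diagrams flanking a boundary move lie in $\mathcal{D}_1$. The only reason they do is precisely that RIV and RV preserve the vertex count---the very fact that renders your second stage redundant. Once you accept $M_5=\mathrm{RIV}$ and $M_6=\mathrm{RV}$, both the gap in stage one and the need for stage two disappear simultaneously, and you recover the paper's short argument.
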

\begin{proof}
First assume that $\phi_1(D) = \phi_1(D')$. Then the link diagrams are related by M-moves. It is easy to see that the link diagrams are then related by Reidemeister moves, so we have that $\phi(D) = \phi(D')$.

Conversely, suppose that $\phi(D) = \phi(D')$. Hence the diagrams are related by Reidemeister moves. We need to show that each Reidemeister move can be describes as a sequence of M-moves. For RI--RIII, we refer the reader to \cite{MR3148509}.  RIV and RIV are exactly $M_5$ and $M_6$ moves, so we have that all the Reidemeister moves can be described as a sequence of $M$-moves. Hence $\phi_1(D) = \phi_1(D')$, as required.
\end{proof}

\section{Virtual link diagrams with same the signed ribbon graphs.}\label{s.virt}
A {\em virtual link diagram}  consists of $n$ closed piecewise-linear plane curves in which there are finitely many multiple points and such that at each multiple point exactly two arcs meet and they meet transversally. Moreover, each double point is assigned either a {\em classical crossing} structure or is marked as a {\em virtual crossing}.  See the left-hand side of Figure~\ref{c5.s2.ss3.f2} where the virtual crossings are marked by circles. A virtual link is {\em oriented} if each of its plane curves is. Further details on virtual knots can be found in, for example, the surveys \cite{MR1721925,MR1865707,MR2994594,MR2255973,MR2068425}.

\begin{figure}[ht]
\centering
\labellist
 \small\hair 2pt
\pinlabel {$1$}  at 91 95 
\pinlabel {$2$}  at 52 95 
\pinlabel {$3$}  at 72 6 
\pinlabel {$1$}  at 225 14 
\pinlabel {$2$}  at 206 102 
\pinlabel {$3$}  at 258 58 
\pinlabel {$+$}  at 200 77 
\pinlabel {$+$}  at 232 43 
\pinlabel {$-$}  at 237 66 
\endlabellist
\includegraphics[scale=0.7]{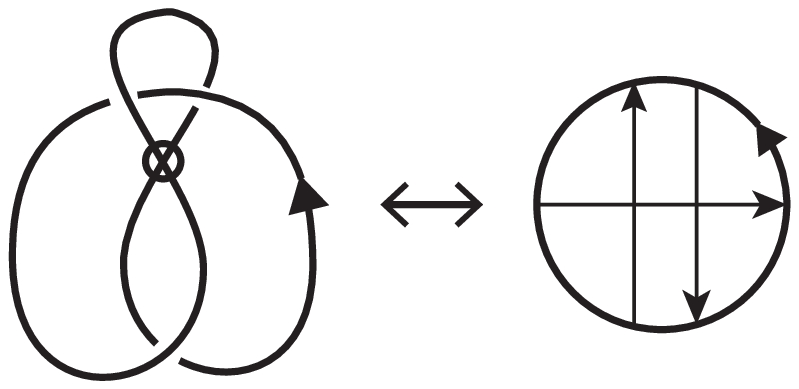}
\caption{A virtual link (on the left) and its Gauss diagram (on the right). The crossings and chords are numbered for clarity.}
\label{c5.s2.ss3.f2}
\end{figure}

Virtual links are considered up to the {\em generalised Reidemeister moves}. These consist of orientation preserving homeomorphisms of the plane (which we include in any subset of the moves), the classical Reidemester moves of Figure~\ref{f2a}, and the virtual Reidemeister moves of Figure~\ref{vrm}.
 Two virtual link diagrams are {\em equivalent} if there is a sequence of generalised Reidemeister moves taking one diagram to the other.

\begin{figure}[ht]
\centering
\labellist
 \small\hair 2pt
\pinlabel {vRIII}  at 108 50
\pinlabel {vRI}  at 108 158
\pinlabel {vRII}  at 398 158
\pinlabel {RIII}  at 398 50
\pinlabel {mixed}  at 398 65
\endlabellist
\includegraphics[scale=.55]{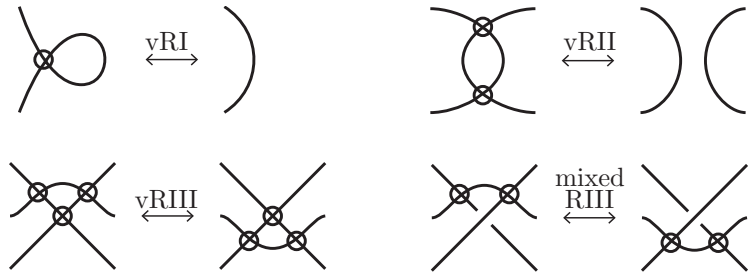}
\caption{The virtual Reidemeister moves.}
\label{vrm}
\end{figure}

Virtual knots are the knotted objects that can be represented by Gauss diagrams.
Here  a {\em Gauss diagram} consists of a set of oriented circles together with a set of oriented signed chords whose end points lie on the circles (see the right-hand side of Figure~\ref{c5.s2.ss3.f2}). 
A Gauss diagram is obtained from an oriented $n$ component virtual link diagram $D$ as follows. Start by numbering each classical crossing.  For each component, choose a base point and travel round the component from the base point  following the orientation and reading off the numbers of the classical crossings as they are met. Whenever a crossing is met as an over-crossing, label the corresponding number with the letter O.  Place each number, in the order met, on an oriented circle corresponding to the component. Connect the points on the circles that have the same number by a chord that is directed away from the O-labelled number. Finally, label each chord with the {\em oriented sign} of the corresponding crossing, shown in Figure~\ref{c5.s2.ss3.f1},  and delete the numbers. The resulting Gauss diagram describes $D$. See Figure~\ref{c5.s2.ss3.f2} for an example.

\begin{figure}[ht]
\centering
\subfigure[A positive crossing.]{
\hspace{10mm}\includegraphics[scale=.5]{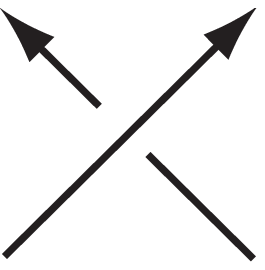}\hspace{10mm}
\label{c5.s2.ss3.f1a}
}
\hspace{10mm}
\subfigure[A negative crossing. ]{
\hspace{10mm}\includegraphics[scale=.5]{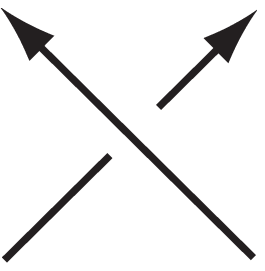}\hspace{10mm}
\label{c5.s2.ss3.f1b}
}

\caption{The oriented signs of a link diagram.}
\label{c5.s2.ss3.f1}
\end{figure}

Conversely, an oriented virtual link diagram can be obtained from a Gauss diagram by immersing the circles in the plane so that the ends of chords are identified (there is no unique way to do this), and using the direction and signs to obtain a crossing structure. In general, immersing the circles will create double points that do not arise from chords. Mark these as  virtual crossings.  

The following theorem of Goussarov, Polyak and Viro from \cite{MR1763963} provides an important and fundamental relation between Gauss diagrams and virtual links.
\begin{theorem}\label{c5.s1.ss3.t2}
Let $L$ and $L'$ be two virtual link diagrams that are described by the same Gauss diagram. Then $L$ and $L'$ are equivalent. 
Moreover, $L$ and $L'$ are related by the virtual Reidemeister moves.
\end{theorem}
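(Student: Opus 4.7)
The plan is to show that a virtual link diagram is determined, up to virtual Reidemeister moves, by the combinatorial data of its Gauss diagram: the Gauss diagram records exactly the information preserved by the moves in Figure~\ref{vrm}, while everything else (in particular, where virtual crossings appear) is arbitrary and can be undone by those moves. My strategy is to construct a canonical diagram $L_G$ from a Gauss diagram $G$, and then prove that any virtual link diagram $L$ realising $G$ is related to $L_G$ by a sequence of virtual Reidemeister moves. Applied to both $L$ and $L'$, this yields the theorem.

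To construct $L_G$, first fix small pairwise-disjoint discs in the plane, one for each chord of $G$, and inside each disc draw the classical crossing prescribed by the sign and direction of that chord. Each such disc has four marked endpoints on its boundary, and the circles of $G$ determine a cyclic ordering on the collection of these endpoints, partitioned according to which circle the endpoint lies on. Now connect the endpoints in this prescribed cyclic order by embedded arcs in the plane, in generic position, and mark each double point of two arcs, or of an arc with itself, as a virtual crossing. Any such choice of connecting arcs produces a virtual link diagram with Gauss diagram $G$; call the result $L_G$.

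The main task is then the following claim, which I would call the \emph{detour lemma}: if two virtual link diagrams have the same small classical-crossing discs in the same positions, and differ only in how the arcs outside those discs connect matching endpoints, then they are related by the virtual Reidemeister moves vRI, vRII, vRIII together with the mixed move. The idea is that an arc joining two fixed endpoints in the complement of the classical discs can be deformed to any other arc with the same endpoints by a regular homotopy in the plane; each elementary event of such a homotopy (a cusp creation/cancellation, a tangency of two arcs, or a triple point) is realised, in the presence of other strands, by one of vRI, vRII, or vRIII, while the mixed move allows a virtual strand to be pushed across a classical crossing disc. Given an arbitrary $L$ realising $G$, one first shrinks neighbourhoods of its classical crossings to match the standard discs (using planar isotopy and the moves just listed), and then applies the detour lemma to replace the connecting arcs by those of $L_G$. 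Doing the same for $L'$ and composing the two resulting sequences of moves establishes the theorem.

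The main obstacle is the detour lemma, and more specifically justifying the role of the mixed move. Without it, one could not push a virtual crossing past a classical one, and there would be no hope of equating two arbitrary arc-connection patterns, since some arcs in $L$ may be forced to cross the classical-crossing region of $L'$. The mixed move is precisely the ingredient that lets a virtual strand detour around a classical crossing, and the careful case analysis needed to realise an arbitrary planar homotopy of an arc (in the complement of finitely many marked discs and in the presence of many other strands, using only the four allowed local moves) is what turns the informal homotopy argument above into a rigorous proof.
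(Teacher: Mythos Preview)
The paper does not give its own proof of this theorem: it is quoted as a result of Goussarov, Polyak and Viro \cite{MR1763963} and used as a black box in the proof of Theorem~\ref{t.vmain}. So there is no in-paper argument to compare against.

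That said, your outline is the standard and correct approach, essentially Kauffman's \emph{detour move} argument. The observation that the virtual Reidemeister moves (vRI, vRII, vRIII, and the mixed move) together allow any arc containing only virtual crossings to be excised and redrawn along an arbitrary new path between the same endpoints is exactly the content of the detour lemma, and it is precisely what is needed here. Your identification of the mixed move as the crucial ingredient---the one that lets a virtual strand pass over a classical crossing---is also right. One small refinement: rather than invoking a regular-homotopy classification of arcs and analysing cusps, tangencies, and triple points, the cleaner route is simply to observe that a single composite ``detour'' (remove an arc, redraw it elsewhere, mark new intersections virtual) can be decomposed directly into a finite sequence of the four local moves; this avoids the need to track a continuous homotopy and makes the argument purely combinatorial. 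With that adjustment your sketch is complete and matches the proof in the cited literature.
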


In \cite{MR2460170}, Chmutov and Voltz  observed that the construction of a ribbon graph from a link diagram can be extended to include virtual links. That is, if $D$ is a virtual link diagram and $\sigma$ is a state of $D$, then $G_{(D,\sigma)}$, and the set $\mathbb{G}_D$ can be associated with $D$ just as in Section~\ref{s3a} (virtual crossings are not smoothed, and the curves of the arrow presentation follow the component of the virtual link through the virtual crossings).

In Theorem~\ref{mainthm} we determined how diagrams of links in \RPthree that are represented by the same set of ribbon graphs are related. We will now consider the corresponding problem for virtual links. We start by determining which ribbon graphs represent virtual link diagrams.

If $G$ is a signed ribbon graph, then we can recover a virtual link diagram $D$ with $G=G_D$ as follows: delete the interiors of the vertices of $G$ (so that we obtain a set of ribbons that are attached to circles). Immerse the resulting object in the plane in such a way that the ribbons are embedded. (Note that as the circles are immersed, they may cross each other and themselves.) Replace each embedded ribbon  with a classical crossing with the crossing structure determined by the sign as in Figure~\ref{f.rgtol}. Make all of the intersection points of the immersed circles into virtual crossings. See Figure~\ref{c5.s6.ss2.f1}.
The resulting virtual link diagram $D$ has the desired property that $G=G_D$ (as $G$ can be obtained for $D$ by reversing the above construction). Moreover, every virtual link diagram that is represented by $G$ can be obtained in this way. This follows since if $G=G_D$, then we can go through the above process drawing the  circles and crossings in such a way that they follow $D$. 

Thus we have that every signed ribbon graph is the signed ribbon graph of some virtual link diagram.

\begin{figure}[ht]
\centering
\labellist
 \small\hair 2pt
\pinlabel {$2+$}  at 168 136
\pinlabel {\rotatebox{-90}{$1-$}}  at 13 120 
\pinlabel {$3-$}  at 134 17 
\endlabellist
\includegraphics[scale=.45]{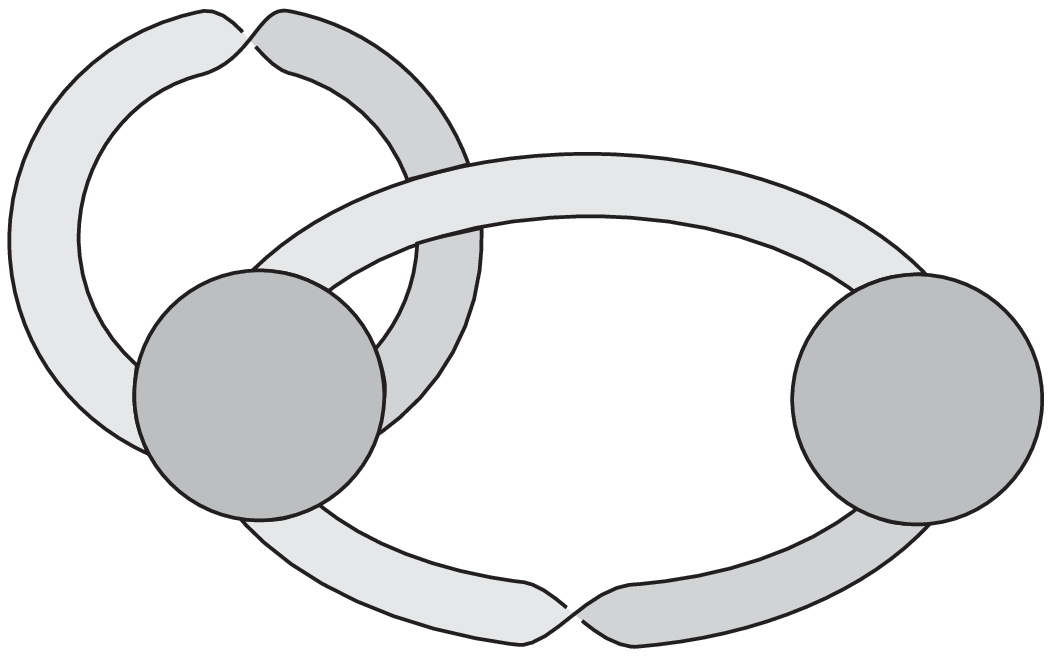}
\quad \raisebox{11mm}{\includegraphics[scale=.6]{ch5_0}} \quad
\labellist
 \small\hair 2pt
\pinlabel {$2+$}  at 168 136
\pinlabel {\rotatebox{-90}{$1-$}}  at 13 120 
\pinlabel {$3-$}  at 134 17 
\endlabellist
\includegraphics[scale=.45]{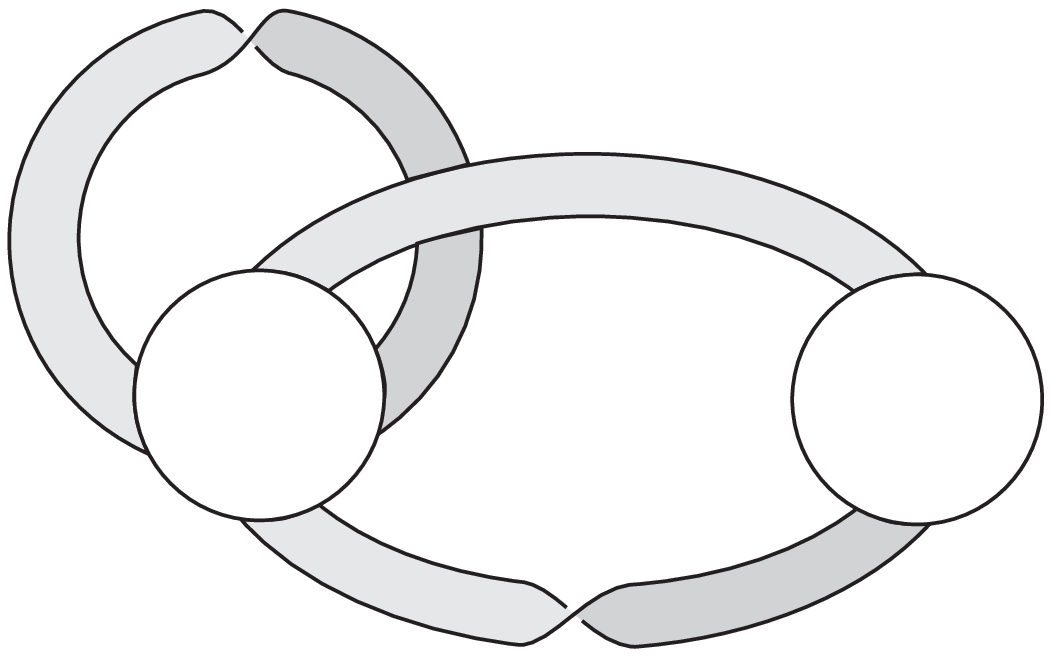}

\quad \raisebox{11mm}{\includegraphics[scale=.6]{ch5_0}} \quad
\labellist
 \small\hair 2pt
\pinlabel {$2+$}  at 104 91
\pinlabel {$1-$}  at  104 165
\pinlabel {$3-$}  at  104 18
\endlabellist
\includegraphics[scale=.45]{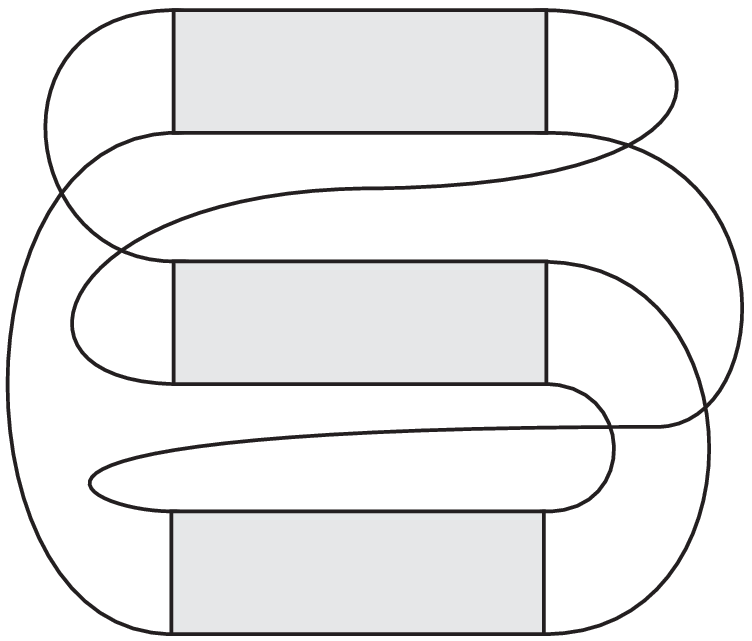}
 \raisebox{11mm}{\includegraphics[scale=.6]{ch5_0}} \quad
\includegraphics[scale=.45]{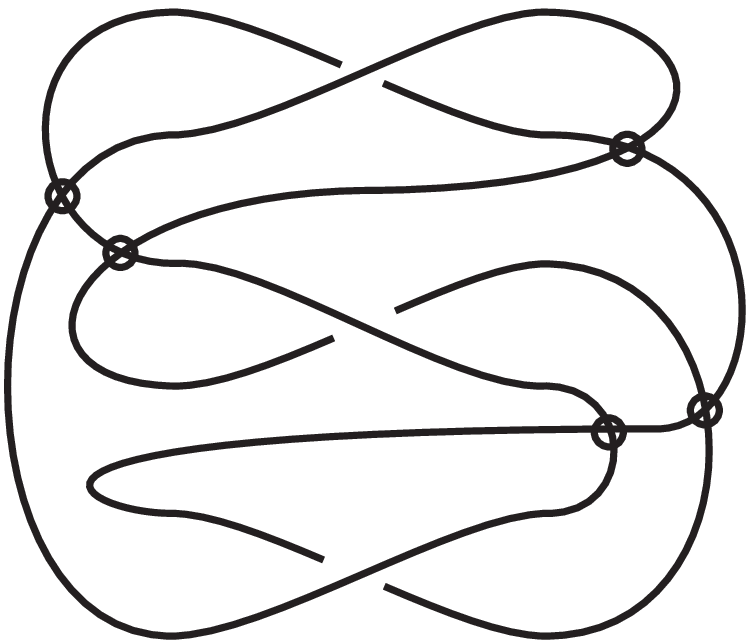}
\caption{Recovering a virtual link diagram from a signed ribbon graph.}
\label{c5.s6.ss2.f1}
\end{figure}

We now determine how virtual link diagrams that are represented by the same ribbon graphs are related. For this we need the concept of  virtualisation.
The {\em virtualisation} of a crossing of a virtual link diagram is the flanking of the crossing with  virtual crossings as indicated in  Figure~\ref{c5.s6.ss2.f2}. The crossing in the figure can also be of the opposite type. 
\begin{figure}[ht]
\centering
\includegraphics[scale=.4]{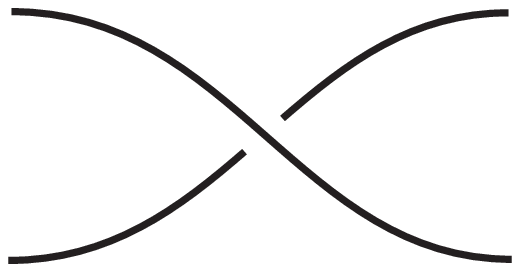}
\quad \raisebox{3mm}{\includegraphics[scale=.8]{ch5_0}} \quad
\includegraphics[scale=.4]{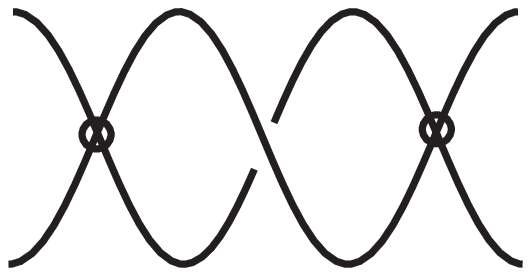}
\caption{Virtualising a crossing.}
\label{c5.s6.ss2.f2}
\end{figure}

\begin{theorem}\label{t.vmain}
Let $D$ and $D'$ be two virtual link diagrams. Then $D$ and $D'$   are presented by the same set of  signed ribbon graphs if and only if they are related by virtualisation and the virtual Reidemeister moves. 
\end{theorem}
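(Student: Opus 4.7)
My plan is to handle the two implications separately, with the converse being the harder direction; it will rest on the Goussarov--Polyak--Viro theorem (Theorem~\ref{c5.s1.ss3.t2}) combined with the explicit reconstruction of a virtual link diagram from a signed ribbon graph given immediately before the statement.

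For the forward direction, I will verify directly from the definitions that both allowed moves preserve $\mathbb{G}_D$. Each virtual Reidemeister move only rearranges virtual crossings and the arcs between them, and since virtual crossings are transparent to the marked-state construction (the state curves pass through them, exactly as in the Chmutov--Voltz setting), every marked state $\sigma$ yields the same signed arrow presentation before and after the move. For virtualisation, a local picture-chase using Figures~\ref{c5.s6.ss2.f2} and~\ref{f.spl} shows that flanking a classical crossing by virtual crossings leaves the signed arrow presentation at that crossing unchanged: the added virtual crossings are again invisible to the construction, and the marked A- and B-splicings at the affected crossing produce the same local pair of signed, labelled arrows as before.

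For the converse, assume $\mathbb{G}_D = \mathbb{G}_{D'}$ and pick $G \in \mathbb{G}_D \cap \mathbb{G}_{D'}$, so $G = G_{(D,\sigma)} = G_{(D',\sigma')}$ for some marked states. I will then apply the reconstruction described just before the theorem: delete the interiors of the vertices of $G$, immerse the resulting ribbon-decorated circles in the plane so that each edge-ribbon is embedded, replace each edge-ribbon by a classical crossing as in Figure~\ref{f.rgtol}, and declare the remaining planar intersections of the circles to be virtual crossings. Both $D$ and $D'$ are outputs of this procedure starting from $G$. The only freedoms in the procedure are (i) the planar immersion of the ribbon-decorated circles and (ii) a local choice at each edge-ribbon of how it is glued in (equivalently, of which of its two faces is placed uppermost), which controls the over/under structure of the resulting classical crossing.

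To finish, I will match each freedom with one of the allowed moves. A local change of (ii) at a single edge-ribbon is, by comparing Figure~\ref{f.rgtol} with Figure~\ref{c5.s6.ss2.f2}, exactly a virtualisation of the corresponding classical crossing, so two outputs of the procedure differing only in (ii) are related by virtualisations. Once (ii) is matched between the reconstructions of $D$ and $D'$, the two diagrams have the same underlying Gauss diagram---its chord endpoints on the circles, chord directions, and chord signs are now all determined by $G$ together with the fixed over/under data---so by Theorem~\ref{c5.s1.ss3.t2} they are related by virtual Reidemeister moves. I expect the principal obstacle to be pinning down freedom (ii) precisely and verifying that a local change of it produces exactly the flanked configuration of Figure~\ref{c5.s6.ss2.f2} rather than some other local modification; once this is in place, the rest of the converse is a direct combination of the reconstruction recipe with Theorem~\ref{c5.s1.ss3.t2}.
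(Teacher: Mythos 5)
Your proposal is correct and follows essentially the same route as the paper: reconstruct every diagram represented by a common ribbon graph $G$ via the vertex-deletion/immersion procedure, observe that the only essential freedoms are the planar immersion (absorbed by Theorem~\ref{c5.s1.ss3.t2} via equality of Gauss diagrams) and the two ways of embedding each edge-ribbon, and check that flipping the latter at a single edge is exactly a virtualisation (the paper's Figure~\ref{c5.s6.ss2.f3}). The one point to tidy is your parenthetical that the gluing choice ``controls the over/under structure'': the over/under data is fixed by the edge sign as in Figure~\ref{f.rgtol}, and what the flip actually changes is how the four strand-ends attach around the crossing, which is precisely the flanked configuration of Figure~\ref{c5.s6.ss2.f2} — the subtlety you already flagged.
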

\begin{proof} 
Let $G$ be a signed ribbon graph. Label and arbitrarily orient each edge of $G$. As described above, every virtual link diagram represented by $G$ can be obtained by (1) deleting  the interiors of the vertices of $G$, (2) embedding the edges of $G$ in the plane, (3) immersing the arcs connecting the edges (note that arcs in an immersion may cross each other), and (4) adding the crossing structure as described above. 

Suppose $D$ and $D'$ are two virtual link diagrams obtained from $G$ by this procedure.
If the edges of $G$ are oriented, in step (2) each embedding of an edge either agrees or disagrees with the orientation of the plane. If in step (2) of the constructions of $D$ and $D'$ the corresponding edges either both agree or both disagree with the orientation of the plane, it is easily seen that for some orientation of their components (in each diagram choose orientations that agree at each pair of  crossings that correspond the the same edge of the ribbon graph),  $D$ and $D'$ must then be described by the same Gauss diagram.   In this case, by  Theorem~\ref{c5.s1.ss3.t2}, they are related by the purely virtual moves and the semivirtual move. 

Now suppose that in step (2) of the construction of $D$ and $D'$, there is an edge $e$ of $G$ such that 
the orientations of the two plane embeddings  disagree  with each other, and otherwise the embeddings of the edges and immersions of the arcs in step (3) are identical. Then, by Figure~\ref{c5.s6.ss2.f3}, the resulting virtual link diagrams are related by virtualisation.

It then follows that if $G$ is a signed ribbon graph then the link diagrams it represents are related by virtualisation and the  virtual moves. The converse of the theorem is easily seen to hold.
\end{proof}

\begin{figure}[ht]
\centering
\labellist
 \small\hair 2pt
 \pinlabel {$+$}  at  100 37
\pinlabel {$a$}  at  3 64
\pinlabel {$b$}  at   3 10
\pinlabel {$d$}  at  194 64
\pinlabel {$c$}  at  194 10
\endlabellist
\includegraphics[height=17mm]{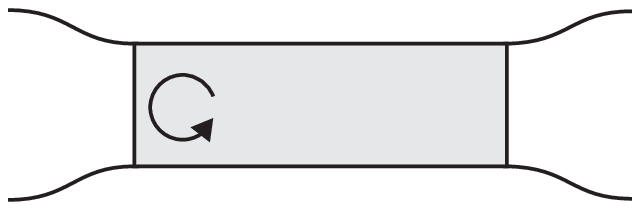}
\quad \raisebox{7mm}{\includegraphics[scale=.6]{ch5_0}} \quad
\labellist
 \small\hair 2pt
\pinlabel {$a$} [r] at  0 78
\pinlabel {$b$} [r] at   0 6
\pinlabel {$d$} [l] at  150 78
\pinlabel {$c$} [l] at  150 6
\endlabellist
\includegraphics[height=15mm]{ch5_84}

\vspace{5mm}

\labellist
 \small\hair 2pt
 \pinlabel {$+$}  at  100 37
\pinlabel {$a$}  at  3 64
\pinlabel {$b$}  at   3 10
\pinlabel {$d$}  at  194 64
\pinlabel {$c$}  at  194 10
\endlabellist
\includegraphics[height=17mm]{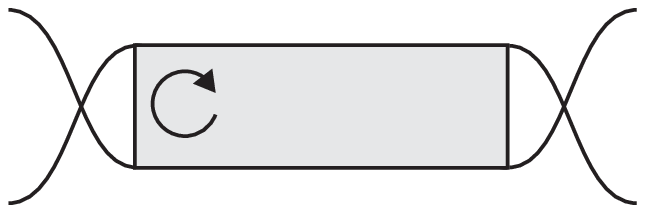}
\quad \raisebox{7mm}{\includegraphics[scale=.6]{ch5_0}} \quad
\labellist
 \small\hair 2pt
\pinlabel {$a$} [r] at  0 78
\pinlabel {$b$} [r] at   0 6
\pinlabel {$d$} [l] at  150 78
\pinlabel {$c$} [l] at  150 6
\endlabellist
\includegraphics[height=15mm]{ch5_83}

\caption{Forming virtual link diagrams from a signed ribbon graph.}
\label{c5.s6.ss2.f3}
\end{figure}

\bibliographystyle{amsplain}  
\bibliography{references}

\end{document}